\documentclass[12pt]{amsart}
\usepackage{amssymb,amsmath,amsthm,latexsym}
\usepackage{mathrsfs}
\usepackage{a4wide}
\usepackage{eucal}
\usepackage[noadjust]{cite}
\usepackage[usenames,dvipsnames]{color}
\usepackage{stackengine}
\usepackage[utf8]{inputenc}
\usepackage[T1]{fontenc}
\usepackage{dsfont}
\DeclareFontFamily{U}{mathx}{\hyphenchar\font45}
\DeclareFontShape{U}{mathx}{m}{n}{
      <5> <6> <7> <8> <9> <10>
      <10.95> <12> <14.4> <17.28> <20.74> <24.88>
      mathx10
      }{}
\DeclareSymbolFont{mathx}{U}{mathx}{m}{n}
\DeclareFontSubstitution{U}{mathx}{m}{n}
\DeclareMathAccent{\widecheck}{0}{mathx}{"71}
\DeclareMathAccent{\wideparen}{0}{mathx}{"75}

\newtheorem{theorem}{Theorem}[section]
\newtheorem{lemma}[theorem]{Lemma}
\newtheorem{corollary}[theorem]{Corollary}
\newtheorem{proposition}[theorem]{Proposition}
\theoremstyle{remark}

\newtheorem{remark}[theorem]{Remark}
\newtheorem*{remark*}{Remark}
\theoremstyle{definition}
\newtheorem{definition}[theorem]{Definition}

\numberwithin{equation}{section}
\makeatother

\newcommand{\vertiii}[1]{{\left\vert\kern-0.25ex\left\vert\kern-0.25ex\left\vert #1 
    \right\vert\kern-0.25ex\right\vert\kern-0.25ex\right\vert}}

\newcounter{smallromans}

\newenvironment{romanenumerate}
{\begin{list}{{\normalfont\textrm{(\roman{smallromans})}}}%
  {\usecounter{smallromans}\setlength{\itemindent}{0cm}%
   \setlength{\leftmargin}{5.5ex}\setlength{\labelwidth}{5.5ex}%
   \setlength{\topsep}{.5ex}\setlength{\partopsep}{.5ex}%
   \setlength{\itemsep}{0.1ex}}}%
{\end{list}}

\newcounter{smallromansdash}

{\end{list}}

\newcounter{bigromans} 
  {\end{list}}

\begin{document}
\date{October 7, 2017}
\title{When is multiplication in a Banach algebra open?}
\author[Sz.~Draga]{Szymon Draga}
\address{Institute of Mathematics,
University of Silesia,
Bankowa 14, 40-007 Katowice, Poland and Institute of Mathematics,
Czech Academy of Sciences, \v{Z}itn\'{a} 25, 115 67 Prague~1, Czech Republic}
\email{szymon.draga@gmail.com}

\author[T.~Kania]{Tomasz Kania}
\address{Mathematics Institute,
University of Warwick,
Gibbet Hill Rd, 
Coventry, CV4 7AL, 
England and Institute of Mathematics, Czech Academy of Sciences, \v{Z}itn\'{a} 25, 115~67 Prague 1, Czech Republic}
\email{tomasz.marcin.kania@gmail.com}

\subjclass[2010]{46A30 (primary), and 46B26, 46J10, 47L10 (secondary)} 
\keywords{Banach algebra, open mapping, uniformly open map, Schauder's lemma, convolution, topological stable rank one, ultraproduct, Calkin algebra}

\thanks{Research of the first-named author was supported by the GA\v{C}R project 16-34860L and RVO: 67985840. The second-named author acknowledges with thanks funding received from the European Research Council; ERC Grant Agreement No.~291497.}
\begin{abstract}We develop the theory of Banach algebras whose multiplication (regarded as a bilinear map) is open. We demonstrate that such algebras must have topological stable rank 1, however the latter condition is strictly weaker and implies only that products of non-empty open sets have non-empty interior. We then investigate openness of convolution in semigroup algebras resolving in the negative a problem of whether convolution in $\ell_1(\mathbb{N}_0)$ is open. By appealing to ultraproduct techniques, we demonstrate that neither in $\ell_1(\mathbb{Z})$ nor in $\ell_1(\mathbb Q)$ convolution is uniformly open. The problem of openness of multiplication in Banach algebras of bounded operators on Banach spaces and their Calkin algebras is also discussed.\end{abstract}
\maketitle

\section{Introduction}

Perhaps the most spectacular failure of the multilinear version of Banach's open-mapping theorem is elucidated by D.H.~Fremlin's example of the function $f(x)=x-1/2$ ($x\in [0,1]$) that witnesses the lack of openness of multiplication in the real Banach algebra $C[0,1]$ at $(f,f)$ for rather obvious reasons. Of course, examples of bounded bilinear maps that are not open had been known before (just to mention the primordial example of Cohen \cite{cohen} and the first example on a finite-dimensional space by Horowitz \cite{horowitz}), however it was Fremlin's example that triggered intensive study of openness of multiplication in various function algebras \cite{bbs, bm, bms, bmw, bww, be1, be2, be3, be4, kt, komisarski}.\smallskip

The aim of this paper is twofold. Firstly, we aim at putting the theory of Banach algebras with open multiplication at a more systematic footing. By this we mean investigation of preservation of (weakly, uniformly) open multiplication by operations such as completions, direct products, quotients, direct limits, ultraproducts, \emph{etc}. We wish to emphasise that our results place the property of having topological stable rank 1 (that is, having dense invertible group) strictly between having weakly open and open multiplication. \smallskip

Secondly, once the above-listed permanence properties are established, we turn our attention to concrete examples of Banach algebras such as algebras of continuous functions, semigroup convolution algebras, algebras of functions having bounded variation or algebras of bounded operators on Banach spaces. For example, we extend Komisarski's result that links openness of multiplication in $C(X)$ with the covering dimension of $X$ to the case of complex scalars (Proposition~\ref{dim2}); as an offshoot of these investigations, we obtain a new proof in the case where $X$ is zero-dimensional (Proposition~\ref{0dim}). Moreover, we address various questions left open in the aforementioned papers. For instance, we prove that the Cauchy product (convolution) in $\ell_1(\mathbb{N}_0)$ is not open (Corollary~\ref{Cauchyproduct}), thereby answering in the negative a question posed by Balcerzak, Behrends and Strobin \cite[Question 3 on p.~493]{bbs}. Moreover, using ultraproduct methods and the theory of infinite Abelian groups, we prove that the algebras $\ell_1(\mathbb{Z})$ and $\ell_1(\mathbb{Q})$ do not have uniformly open multiplication (Corollary~\ref{ZandQ}). We also recover a recent result of Kowalczyk and Turowska \cite{kt} asserting that multiplication in the Banach algebra of all scalar-valued functions with bounded variation on an interval is weakly open.\smallskip

The final section is devoted to Banach algebras $\mathcal{B}(E)$ of bounded operators acting on a~Banach space $E$. We demonstrate that for all classical Banach spaces $E$, neither $\mathcal{B}(E)$ nor the corresponding Calkin algebra has open multiplication (Theorem~\ref{B(E)andQ(E)}). However, we also show, by using Fredholm theory, that these algebras have weakly open multiplication for certain Banach spaces $E$, that include the hereditarily indecomposable Banach spaces (Theorem~\ref{B(E)weaklyopen}).\smallskip

Let us remark that the problem of openness of a bilinear map has global rather than local character. Indeed, it is clear that if multiplication is open when restricted to a~ball centred at zero (more precisely, to the set $B(0,r)\times B(0,r)$ for some $r>0$), then it is open. However, openness on a~different open ball is usually not sufficient. Indeed, multiplication in a~Banach algebra with the identity element $1$ is always open on the ball $B(1,1)$ comprising invertible elements; this follows from the fact that the mapping $x\mapsto xy$ is a~homeomorphism for any invertible element $y$.

\section{Preliminaries}
\subsection{Open and uniformly open maps} The notion of an open map between metric spaces plays a pivotal r\^{o}le in this article so let us invoke the definition. \smallskip

Let $X$ and $Y$ be metric spaces. We denote by $B(x,r)$ the open ball in $X$ centred at $x$ with radius $r$ and we use the same symbol for open balls in $Y$ trusting that it will not lead to confusion. (Occasionally, we may use the subscripts $X, Y$ \emph{etc.}, to indicate the space we consider the ball in.) A function $f\colon X\to Y$ is \emph{open} whenever for any open set $U\subseteq X$ the image $f(U)$ is open. This is of course equivalent to saying that for any $x\in X$ and $\varepsilon > 0$ there is $\delta > 0$ such that
$$B(f(x), \delta)\subseteq f[B(x,\varepsilon)].$$
When $\delta$ depends on $\varepsilon$ but not on $x$ such map is termed \emph{uniformly open}. Of course, surjective, bounded linear maps between Banach spaces are important examples of open maps. We shall require an open-mapping theorem due to Schauder, which says essentially that uniformly \emph{almost} open maps on complete metric spaces are (uniformly) open. (It may be found, \emph{e.g.}, in \cite[Lemma 3.9]{mv}.)\smallskip

\begin{lemma}[Schauder]\label{dek}Let $X$ and $Y$ be metric spaces and let $f\colon X\to Y$ be a continuous function. If $X$ is complete and $f$ has the property that for every $\varepsilon > 0$ there exists $\delta > 0$ such that for any $x\in X$ one has $$B(f(x), \delta)\subseteq \overline{f[B(x, \varepsilon)]},$$ then $f$ is uniformly open.\end{lemma}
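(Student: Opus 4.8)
The plan is to prove Schauder's lemma by the classical iteration-and-summation argument familiar from the proof of the open mapping theorem, adapted to the purely metric (non-linear) setting. First I would fix $\varepsilon > 0$ and, by hypothesis, choose a sequence $\varepsilon_n > 0$ with $\varepsilon_0 = \varepsilon/2$ and $\sum_{n\geq 0}\varepsilon_n \leq \varepsilon$, together with corresponding $\delta_n > 0$ satisfying $B(f(z),\delta_n)\subseteq\overline{f[B(z,\varepsilon_n)]}$ for every $z\in X$; one may also arrange $\delta_n\to 0$. I would then show that $\delta := \delta_0/2$ works uniformly, i.e. that $B(f(x),\delta)\subseteq f[B(x,\varepsilon)]$ for every $x\in X$.

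To do this, fix $x\in X$ and $y\in Y$ with $d(y,f(x)) < \delta$. I would inductively construct a sequence $(x_n)_{n\geq 0}$ with $x_0 = x$ such that $d(f(x_n),y)$ is small — specifically less than some $\eta_n$ with $\eta_0 < \delta_0$ — and $d(x_{n+1},x_n) < \varepsilon_n$: given $x_n$ with $d(f(x_n),y) < \delta_n$ (after possibly shrinking, using $\delta_n\to0$, to also get $<\eta_n$ for a preassigned summable/vanishing $\eta_n$), the inclusion $y\in B(f(x_n),\delta_n)\subseteq\overline{f[B(x_n,\varepsilon_n)]}$ lets me pick $x_{n+1}\in B(x_n,\varepsilon_n)$ with $d(f(x_{n+1}),y)$ as small as I like, in particular $<\delta_{n+1}$ and $<\eta_{n+1}$. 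The bound $d(x_{n+1},x_n)<\varepsilon_n$ with $\sum\varepsilon_n\leq\varepsilon$ shows $(x_n)$ is Cauchy, so by completeness of $X$ it converges to some $x_\infty$ with $d(x_\infty,x) \leq \sum_{n\geq0}\varepsilon_n$. The one subtlety is to ensure this total distance is strictly less than $\varepsilon$ so that $x_\infty\in B(x,\varepsilon)$: this is why I start the telescoping sum at $\varepsilon_0=\varepsilon/2$ and pick the later $\varepsilon_n$ with room to spare, or more cleanly, first prove the inclusion $B(f(x),\delta)\subseteq f[\overline{B}(x,\varepsilon)]$ into the closed ball with the cleaner bound and then absorb the slack by running the argument with $\varepsilon/2$ in place of $\varepsilon$. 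Finally, continuity of $f$ gives $f(x_n)\to f(x_\infty)$, while $d(f(x_n),y)\to 0$ by construction, so $f(x_\infty) = y$; hence $y\in f[B(x,\varepsilon)]$.

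Since $\delta = \delta_0/2$ was chosen independently of $x$, this establishes uniform openness, and the continuity of $f$ was used only in the very last step to identify the limit. The main obstacle I anticipate is purely bookkeeping: juggling the two competing smallness requirements on $d(f(x_n),y)$ — it must be below $\delta_n$ to feed the next step of the induction, and the $\delta_n$ need not be monotone, so one should pass to $\delta_n' = \min\{\delta_0,\ldots,\delta_n\}$ or interleave a vanishing auxiliary sequence — while simultaneously keeping $\sum d(x_{n+1},x_n)$ strictly under $\varepsilon$. None of this is deep, but it has to be arranged carefully so that the final point genuinely lands inside the \emph{open} ball $B(x,\varepsilon)$ and not merely its closure.
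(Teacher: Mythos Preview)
The paper does not actually prove this lemma; it merely cites \cite[Lemma 3.9]{mv} as a reference and moves on. Your proposed argument is the standard successive-approximation proof and is correct: the key mechanism---using $y\in\overline{f[B(x_n,\varepsilon_n)]}$ to pick $x_{n+1}\in B(x_n,\varepsilon_n)$ with $d(f(x_{n+1}),y)$ below any preassigned threshold, then invoking completeness and continuity---is exactly what is needed. Your bookkeeping remarks are slightly over-cautious (monotonicity of the $\delta_n$ is irrelevant, since at each step you may make $d(f(x_{n+1}),y)$ smaller than \emph{any} positive number, in particular smaller than $\min(\delta_{n+1},2^{-n})$), but the strategy of proving the closed-ball inclusion first and then halving $\varepsilon$ to land in the open ball is clean and avoids all fuss.
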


Another related notion concerning maps between metric spaces is weak openness. A~map $T\colon X\to Y$ is \emph{weakly open} whenever for any non-empty open set $U\subseteq X$ the image $T[U]$ has non-empty interior (\emph{cf}.~\cite[Section 2]{bww}). We also say that maps $T_\gamma\colon X_\gamma\to Y_\gamma$ ($\gamma\in \Gamma$) are \emph{equi-uniformly open} if they are uniformly open and $\delta$ depends on $\varepsilon>0$ in the same way for all $\gamma$.

\subsection{Banach algebras} A \emph{Banach algebra} is a real or complex Banach space $A$ furnished with associative multiplication making it a ring that satisfies $\|ab\|\leqslant \|a\|\!\cdot\!\|b\|$ ($a,b\in A$). When we do not specify whether we talk about a~real or complex Banach algebra, it means that the statement is valid for both choices of the scalar field---otherwise, we clearly indicate the underlying scalar field. A Banach algebra is \emph{unital} if it has the multiplicative identity. Banach algebras are \emph{isomorphic} if there exists an invertible, bounded linear map between them that preserves multiplication. Let $A$ be a unital Banach algebra. We denote by ${\rm GL}\,A$ the set of all invertible elements in $A$; this set is a group under multiplication and it is open in the norm topology. An element $a$ in a Banach algebra $A$ is a \emph{topological zero divisor} if $$\inf\{ \|xa\|+\|ax\|\colon x\in A,\|x\|=1\}=0.$$

We shall require the following elementary lemma concerning topological zero divisors in Banach algebras. It may be found, \emph{e.g.}, in \cite[Theorem 14 on p.~13]{bd}.
\begin{lemma}\label{topzero}Let $A$ be a unital Banach algebra. Then the boundary of ${\rm GL}\,A$ consists of topological zero divisors. \end{lemma}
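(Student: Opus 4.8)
The plan is to fix $a$ in the boundary $\partial({\rm GL}\,A)$ and exhibit a sequence $(x_n)$ of norm-one elements of $A$ with $\|x_n a\|+\|a x_n\|\to 0$. Since ${\rm GL}\,A$ is open, $a\notin{\rm GL}\,A$; since $a$ lies in the closure of ${\rm GL}\,A$, we may pick $a_n\in{\rm GL}\,A$ with $a_n\to a$. The natural candidates for the required unit vectors are the normalised inverses $x_n:=a_n^{-1}/\|a_n^{-1}\|$.

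First I would show that $\|a_n^{-1}\|\to\infty$ (after passing to a subsequence). This is the only step that needs an idea rather than bookkeeping. If, on the contrary, $\|a_n^{-1}\|\leqslant M$ for all $n$, then from the identity $a_n^{-1}a-1=a_n^{-1}(a-a_n)$ we get $\|a_n^{-1}a-1\|\leqslant M\|a-a_n\|\to 0$, so for large $n$ the element $a_n^{-1}a$ is invertible by the Neumann series, and hence $a$ is left invertible; the symmetric computation with $aa_n^{-1}$ shows that $a$ is right invertible, so $a\in{\rm GL}\,A$ --- a contradiction. Thus $\|a_n^{-1}\|$ is unbounded, and replacing $(a_n)$ by a subsequence we may assume $\|a_n^{-1}\|^{-1}\to 0$.

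With this in hand the estimates are immediate. Writing $x_n a=x_n(a-a_n)+x_n a_n=x_n(a-a_n)+\|a_n^{-1}\|^{-1}1$ and taking norms yields $\|x_n a\|\leqslant\|a-a_n\|+\|a_n^{-1}\|^{-1}\to 0$, and the symmetric manipulation $a x_n=(a-a_n)x_n+\|a_n^{-1}\|^{-1}1$ gives $\|a x_n\|\leqslant\|a-a_n\|+\|a_n^{-1}\|^{-1}\to 0$. Since $\|x_n\|=1$ for every $n$, this shows $\inf\{\|xa\|+\|ax\|\colon x\in A,\ \|x\|=1\}=0$, i.e.\ $a$ is a topological zero divisor. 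The only genuine obstacle is the dichotomy above (ruling out bounded inverses); note also that if $A=\{0\}$ or $\partial({\rm GL}\,A)=\varnothing$ the assertion is vacuous, so there is no loss in assuming the boundary is non-empty.
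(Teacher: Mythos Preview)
Your proof is correct and is the classical argument. The paper does not supply its own proof of this lemma; it merely cites \cite[Theorem~14 on p.~13]{bd}, and the argument you give is essentially the one found there. One cosmetic remark: the split into left- and right-invertibility in your dichotomy step is unnecessary, since once $a_n^{-1}a$ is invertible you have $a=a_n\cdot(a_n^{-1}a)$ as a product of two invertible elements, so $a\in{\rm GL}\,A$ directly.
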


Let $A$ be a unital Banach algebra. An element $a\in A$ is \emph{left-invertible} (or \emph{right-invertible}) if there exists $x\in A$ such that $xa=1$ (or $ax=1$). Conspicuously, these notions coincide in commutative algebras. However, in general they are different (for example, the right-shift operator on the Hilbert space $\ell_2(\mathbb{N})$ is left- but not right-invertible). It turns out that density of invertible, left-invertible and right-invertible elements in a unital Banach algebra are all the same. This has been noticed for C*-algebras by Robertson \cite{robertson} and proved in full generality by Rieffel \cite[Proposition 3.1]{rieffel}. Before we state the result, let us introduce the notion of topological stable rank 1.

\begin{definition}A unital Banach algebra has \emph{topological stable rank} 1 $({\rm tsr}\,A = 1$, for short), when ${\rm GL}\,A$ is dense in $A$. \end{definition}
\begin{proposition}\label{rieffel}Let $A$ be a unital Banach algebra. Then the following conditions are equivalent:
\begin{romanenumerate}
\item ${\rm tsr}\,A=1$;
\item the set of all left-invertible elements of $A$ is dense in $A$;
\item the set of all right-invertible elements of $A$ is dense in $A$.
\end{romanenumerate}
Moreover, if any of the above-listed conditions is met, then $a\in A$ is left-invertible if and only if it is right-invertible (hence invertible).
\end{proposition}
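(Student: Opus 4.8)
The plan is to dispatch the two trivial implications first, then prove the substantive converse, and finally deduce the ``moreover'' clause by a short manipulation. Since ${\rm GL}\,A$ is contained both in the set of left-invertible elements and in the set of right-invertible elements, the implications (i)$\Rightarrow$(ii) and (i)$\Rightarrow$(iii) are immediate. Passing to the opposite algebra $A^{\rm op}$ (same underlying Banach space, same group of invertibles, left- and right-invertible elements interchanged) reduces (iii)$\Rightarrow$(i) to (ii)$\Rightarrow$(i) and likewise halves the work on the final clause, so from now on I focus on the ``left'' versions.

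The heart of the matter is (ii)$\Rightarrow$(i). The naive attempt --- approximate $a\in A$ by a left-invertible $b$ and try to perturb $b$ into ${\rm GL}\,A$ --- is doomed, because a left-invertible element can be \emph{robustly} non-right-invertible (the right shift on $\ell_2(\mathbb{N})$ is the prototype), so no small perturbation of such a $b$ is invertible. The trick, which I regard as the one genuinely delicate point, is to invoke the hypothesis \emph{twice}. Given $a$ and $\varepsilon>0$, first pick a left-invertible $b$ with $\|a-b\|<\varepsilon/2$ and fix $c$ with $cb=1$ (so $b\neq 0$). Now $c$ is \emph{right}-invertible; approximate it by a left-invertible $c'$ with $\|c-c'\|$ as small as we please, and fix $d$ with $dc'=1$. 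If $\|c-c'\|$ is small enough, then $w:=c'b=1-(c-c')b$ lies in ${\rm GL}\,A$ with $\|w^{-1}\|\leqslant(1-\|c-c'\|\,\|b\|)^{-1}$ (Neumann series); consequently $c'(bw^{-1})=1$, so $c'$ has both a left inverse $d$ and a right inverse $bw^{-1}$, hence $c'\in{\rm GL}\,A$ and $(c')^{-1}=bw^{-1}$. The estimate
$$\|b-(c')^{-1}\|=\|b\,w^{-1}(w-1)\|\leqslant\|b\|\,\|w^{-1}\|\,\|w-1\|\leqslant\frac{\|c-c'\|\,\|b\|^{2}}{1-\|c-c'\|\,\|b\|}$$
then shows that, on choosing $\|c-c'\|$ small enough (depending only on $\|b\|$ and $\varepsilon$), the invertible element $(c')^{-1}$ lies within $\varepsilon/2$ of $b$, hence within $\varepsilon$ of $a$. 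Thus ${\rm GL}\,A$ is dense.

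For the ``moreover'' clause, suppose ${\rm tsr}\,A=1$ and let $b$ be left-invertible with $cb=1$. Using density of ${\rm GL}\,A$, choose $b'\in{\rm GL}\,A$ with $\|b-b'\|<\|c\|^{-1}$; then $v:=cb'=1+c(b'-b)$ satisfies $\|1-v\|<1$, so $v\in{\rm GL}\,A$. Since $v\,(b')^{-1}b=(cb')(b')^{-1}b=cb=1$ and $v$ is invertible, multiplying by $v^{-1}$ gives $(b')^{-1}b=v^{-1}$, whence $b=b'v^{-1}$ is a product of invertibles and so $b\in{\rm GL}\,A$. Applying this in $A^{\rm op}$ shows every right-invertible element of $A$ is invertible, which completes the proof. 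Beyond the double use of the hypothesis in (ii)$\Rightarrow$(i), everything is routine: the Neumann series, the fact that an element with a left inverse and a right inverse is invertible, and that a one-sided inverse of an invertible element coincides with its inverse.
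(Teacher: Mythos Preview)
Your proof is correct, and the double application of the density hypothesis in (ii)$\Rightarrow$(i) is indeed the key idea. Note, however, that the paper does \emph{not} supply its own proof of this proposition: it is stated as a known result and attributed to Rieffel \cite[Proposition~3.1]{rieffel} (with the C*-case credited to Robertson \cite{robertson}), so there is nothing in the paper to compare your argument against. For what it is worth, your argument is essentially Rieffel's: approximate a left inverse of the given element by a left-invertible element, observe that a small perturbation of the identity is invertible, and conclude that the approximant is two-sided invertible with inverse close to the original element. Your derivation of the ``moreover'' clause via $b=b'v^{-1}$ is clean and correct; the reduction to one side by passing to $A^{\rm op}$ is the standard way to avoid repeating the argument.
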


We shall require the following standard fact in Banach algebras, which follows from the observation that elements whose resolvent sets have empty interior in the complex plane form a dense set.
\begin{proposition}\label{index0}Suppose that $A$ is a unital Banach algebra with the property that each element $a\in A$ has totally disconnected spectrum. Then ${\rm tsr}\, A=1$. \end{proposition}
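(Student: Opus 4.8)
The plan is to verify the definition of topological stable rank~1 head-on, that is, to show that every $a\in A$ lies in $\overline{{\rm GL}\,A}$, by perturbing $a$ along the one-parameter family of scalar translates $\lambda\mapsto a-\lambda 1$. So fix $a\in A$ and $\varepsilon>0$; the goal is to produce an invertible element within distance $\varepsilon$ of $a$.

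The key point is the elementary topological observation alluded to in the statement: a totally disconnected subset of the complex plane (and, a fortiori, of the real line) has empty interior, since any non-empty open subset of $\mathbb{C}$ or $\mathbb{R}$ contains a non-degenerate connected set. Apply this to the spectrum $\sigma(a)$. Whatever the scalar field $\mathbb{K}$, the element $a-\lambda 1$ is invertible in $A$ precisely when $\lambda\notin\sigma(a)$, and—because $\sigma(a)\cap\mathbb{K}$ has empty interior in $\mathbb{K}$—the disc $\{\lambda\in\mathbb{K}:|\lambda|<\varepsilon\}$ is not contained in $\sigma(a)$. Pick $\lambda_0\in\mathbb{K}$ with $|\lambda_0|<\varepsilon$ and $\lambda_0\notin\sigma(a)$; then $a-\lambda_0 1\in{\rm GL}\,A$ and $\|a-(a-\lambda_0 1)\|=|\lambda_0|<\varepsilon$. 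Since $a$ and $\varepsilon$ were arbitrary, ${\rm GL}\,A$ is dense in $A$, i.e.\ ${\rm tsr}\,A=1$.

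I do not anticipate a genuine obstacle here; the only point deserving a little care is the bookkeeping in the real-scalar case, where the spectrum $\sigma(a)$ is computed in the complexification of $A$ (and is symmetric about $\mathbb{R}$) while the admissible perturbations $\lambda_0 1$ must have $\lambda_0\in\mathbb{R}$. One therefore invokes emptiness of the interior of $\sigma(a)\cap\mathbb{R}$ in $\mathbb{R}$—which still follows from total disconnectedness of $\sigma(a)$, as a subspace of a totally disconnected space is totally disconnected—rather than emptiness of the interior of $\sigma(a)$ itself. As a byproduct, the same argument shows that any $a\in A$ whose spectrum has empty interior already belongs to $\overline{{\rm GL}\,A}$, so the conclusion persists under this formally weaker hypothesis; under the stated hypothesis this set of elements is of course all of $A$.
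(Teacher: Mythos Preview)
Your argument is correct and is precisely the standard one the paper alludes to in the sentence preceding the proposition (the paper does not spell out a proof, only remarks that the result ``follows from the observation that elements whose resolvent sets have empty interior in the complex plane form a dense set''---evidently a slip for \emph{spectra} with empty interior, i.e.\ resolvent sets dense near~$0$). Your treatment of the real-scalar case via the complexification is the appropriate way to make the one-line hint rigorous.
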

In particular, the above applies to finite-dimensional unital Banach algebras. We remark in passing that a more general result for higher stable ranks may be found in \cite{cg}. The following fact is an immediate corollary to Proposition~\ref{index0}.

\begin{corollary}Suppose that $A$ is a unital commutative Banach algebra such that the linear span of idempotents in $A$ is dense. Then ${\rm tsr}\,A=1$. \end{corollary}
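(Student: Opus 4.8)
The plan is to \emph{localise}: rather than try to apply Proposition~\ref{index0} to $A$ itself, I would reduce every approximation problem to a finite-dimensional subalgebra. Fix $a\in A$ and $\varepsilon>0$. Using density of the linear span of the idempotents of $A$, choose idempotents $e_1,\dots,e_n\in A$ and scalars $\lambda_1,\dots,\lambda_n$ with $\bigl\|a-\sum_{i=1}^n\lambda_ie_i\bigr\|<\varepsilon/2$, and set $c:=\sum_{i=1}^n\lambda_ie_i$.

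Next I would pass to the unital subalgebra $B\subseteq A$ generated by $e_1,\dots,e_n$. Since $A$ is commutative and each $e_i$ is idempotent, $B$ is linearly spanned by the $2^n$ mutually orthogonal idempotents $f_S:=\prod_{i\in S}e_i\cdot\prod_{i\notin S}(1-e_i)$ indexed by the subsets $S$ of $\{1,\dots,n\}$; hence $B$ is finite-dimensional, therefore closed in $A$, and so $B$ is a finite-dimensional unital Banach algebra under the norm inherited from $A$, with $c\in B$. By the remark following Proposition~\ref{index0} (finite-dimensional unital Banach algebras have topological stable rank $1$, as every element satisfies a polynomial and hence has finite spectrum), ${\rm GL}\,B$ is dense in $B$, so there is $b\in{\rm GL}\,B$ with $\|c-b\|<\varepsilon/2$. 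The inverse of $b$ computed in $B$ is a two-sided inverse in $A$ as well, so $b\in{\rm GL}\,A$ and $\|a-b\|\leqslant\|a-c\|+\|c-b\|<\varepsilon$. As $a$ and $\varepsilon$ were arbitrary, ${\rm GL}\,A$ is dense, i.e.\ ${\rm tsr}\,A=1$.

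The one subtlety — and the reason Proposition~\ref{index0} cannot be quoted for $A$ directly — lies precisely in the passage to $B$: an element lying merely in the \emph{closure} of the span of idempotents need not have totally disconnected spectrum (for instance, in $C(\Delta)$ with $\Delta$ the Cantor set the span of the clopen indicators is dense, yet a continuous surjection $\Delta\to[0,1]$ produces an element with spectrum $[0,1]$). Restricting to the finite-dimensional subalgebra generated by only those idempotents that occur in a given approximation of $a$ removes this difficulty, and the rest of the argument is routine.
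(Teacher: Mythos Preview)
Your argument is correct. The paper itself records the statement as ``an immediate corollary to Proposition~\ref{index0}'' and gives no further details; the most natural reading, in light of the sentence that immediately follows Proposition~\ref{index0} (``In particular, the above applies to finite-dimensional unital Banach algebras''), is precisely the localisation you carry out: approximate $a$ by a finite linear combination $c$ of idempotents, drop into the finite-dimensional unital subalgebra $B$ they generate, and perturb $c$ to an invertible there.

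Your closing remark is the real value added over the paper's one-line attribution. A reader might be tempted to apply Proposition~\ref{index0} to $A$ directly, and your $C(\Delta)$ example makes clear why that fails: density of the span of idempotents does \emph{not} force every element of $A$ to have totally disconnected spectrum. What \emph{is} true, and what both approaches ultimately exploit, is that each element of that span has \emph{finite} spectrum---either seen via Gelfand theory (characters send idempotents to $\{0,1\}$, so $c=\sum\lambda_ie_i$ has at most $2^n$ spectral values), or, as you do, by passing to the finite-dimensional $B$. The two routes yield the same invertible approximant $c-\lambda\cdot 1$ for a suitably small $\lambda$ avoiding the spectrum; your version simply packages the spectral computation inside the appeal to ${\rm tsr}\,B=1$.
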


We refer to \cite{df} and \cite{misa} for further results concerning Banach algebras having topological stable rank 1.

\subsection{Semigroup algebras} Let $S$ be a non-empty set and let $\ell_1(S)$ denote the space of all absolutely summable scalar-valued functions on $S$. By $(e_s)_{s\in S}$ we denote the canonical unit vector basis of $\ell_1(S)$. Suppose that $S$ is a semigroup, that is $S$ is a set that is furnished with an associative operation $\cdot\,$. Then $\ell_1(S)$ becomes a Banach algebra with the convolution product $\ast$ (which we will sometimes refer to as multiplication in $\ell_1(S)$):
$$x\ast y =\sum_{t\in S} \Big(\sum_{r\cdot s=t}\xi_r \eta_s\Big)e_t, $$
where $x=(\xi_s)_{s\in S}, y=(\eta_s)_{s\in S}\in \ell_1(S)$. Denoting by $\mathbb{N}_0$ the additive semigroup of all non-negative integers, convolution in $\ell_1(\mathbb{N}_0)$ is the familiar Cauchy product of series. The semigroup of $\mathbb{N}_0$ is a paradigm example of a \emph{cancellative semigroup}, \emph{i.e.}, a semigroup $S$ with the property that for all $g\in S$ the maps $s\mapsto gs, s\mapsto sg$ ($s\in S$) are injective.\smallskip

If $T\subseteq S$, then $\ell_1(T)$ is naturally a closed subspace of $\ell_1(S)$, which becomes a subalgebra as long as $T$ is a subsemigroup of $S$. If $S$ is a \emph{monoid}, that is, if $S$ has a neutral element $1\in S$, then the Banach algebra $\ell_1(S)$ is unital, where $e_1$ is the identity element; the converse is not true.\smallskip

Every surjective homomorphism $\vartheta\colon T\to S$ of semigroups induces a surjective homomorphism $h_\vartheta\colon \ell_1(T)\to \ell_1(S)$ of Banach algebras by the action \begin{equation}\label{homomorphismtheta}h_\vartheta e_t = e_{\vartheta(t)}\quad (t\in T).\end{equation} We refer the reader to \cite[Chapter 4]{dls} for more information on semigroup algebras.

\subsubsection{The space $c_0(\Lambda)$} Let $\Lambda$ be a non-empty set. We denote by $c_0(\Lambda)$ the space of all scalar-valued functions on $\Lambda$ that converge to 0, that is, functions $f$ such that for any $\varepsilon>0$ the set $\{\lambda\in \Lambda\colon |f(\lambda)|\geqslant \varepsilon
\}$ is finite. The space $c_0(\Lambda)$ is a Banach space when equipped with the supremum norm and $c_0(\Lambda)^*$ is isometrically isomorphic to $\ell_1(\Gamma)$ via the pairing
$$\langle f,g\rangle = \sum_{\lambda\in \Lambda}f(\lambda)g(\lambda)\quad \big(f\in c_0(\Lambda), g\in \ell_1(\Lambda)\big).$$

\subsubsection{Ultraproducts of group algebras} In this section we provide a link between the algebraic ultraproduct of groups and the corresponding Banach-space ultraproduct of their group algebras. This has been essentially developed by Daws in \cite[Section 5.4]{daws} in the case where all groups are the same but the constructions carry over to arbitrary ultraproducts so we only sketch them.\smallskip

We briefly define the ultraproducts of Banach algebras. Let $(A_\gamma)_{\gamma\in \Gamma}$ be a family of Banach spaces. Then $A = ( \bigoplus_{\gamma\in \Gamma} A_\gamma)_{\ell_\infty(\Gamma)}$, the space of all tuples $(x_\gamma)_{\gamma\in \Gamma}$ with $x_\gamma\in A_\gamma$ ($\gamma\in \Gamma$) and $\sup_{\gamma\in \Gamma}\|x_\gamma\|<\infty$ is a Banach space under the supremum norm. Let us assume that the index set $\Gamma$ is infinite. We fix a non-principal ultrafilter $\mathcal{U}$ on $\Gamma$. Then the subspace $J=c_0^{\mathcal U}(A_\gamma)_{\gamma\in \Gamma}$ of $A$ comprising all tuples $(x_\gamma)_{\gamma\in \Gamma}$ such that $\lim_{\gamma\to \mathcal{U}}\|x_\gamma\|=0$ is closed. The (Banach-space) \emph{ultraproduct} $\prod_{\gamma\in \Gamma}^{\mathcal U}A_\gamma$ of $(A_\gamma)_{\gamma\in \Gamma}$ with respect to $\mathcal{U}$ is the quotient space $A / J$. If $A_\gamma$ ($\gamma\in \Gamma$) are Banach algebras, then naturally so is $A$ (endowed with the coordinate-wise product) and $J$ is then a closed ideal, hence an ultraproduct of Banach algebras is a Banach algebra.\smallskip

Let $(G_\gamma)_{\gamma\in \Gamma}$ be an infinite collection of groups and let $\mathcal U$ be an ultrafilter on the index set $\Gamma$. The algebraic \emph{ultraproduct} $\prod_{\gamma\in \Gamma}^{\mathcal{U}}G_\gamma$ (denoted also $G^{\mathcal U}$ when $G_\gamma=G$ are all equal and termed then the \emph{ultrapower} of $G$) is the direct product $\prod_{\gamma\in \Gamma}G_\gamma$ quotiented by the equivalence relation $$(g_\gamma)_{\gamma\in \Gamma} \sim (h_\gamma)_{\gamma\in \Gamma}\quad\text{ if and only if }\quad\{\gamma\in \Gamma\colon g_\gamma = h_\gamma\}\in \mathcal{U}.$$

The ultraproduct of groups carries an intrinsic structure of a group, which is Abelian if so are all $G_\gamma$. We define the maps
$$\varphi_0\colon c_0\Big({\prod_{\gamma\in \Gamma}}^{\mathcal{U}}G_\gamma \Big) \to {\prod_{\gamma\in \Gamma}}^{\mathcal{U}}c_0(G_\gamma),\qquad \varphi_1\colon \ell_1\Big({\prod_{\gamma\in \Gamma}}^{\mathcal{U}}G_\gamma \Big) \to {\prod_{\gamma\in \Gamma}}^{\mathcal{U}}\ell_1(G_\gamma) $$
by
$$\varphi_i\big(e_{[(g_\gamma)_{\gamma\in \Gamma}]}\big) = [(e_{g_\gamma})_{\gamma\in \Gamma}]\qquad\mbox{ \big($i=0,1$ and $({g_\gamma})_{\gamma\in \Gamma}\in \prod_{\gamma\in \Gamma}G_\gamma$}\big)$$
and extend them by linearity to (in general, non-surjective) isometries. Given the duality between the Banach spaces $c_0\Big(\prod_{\gamma\in \Gamma}^{\mathcal{U}}G_\gamma \Big)$ and $\ell_1\Big(\prod_{\gamma\in \Gamma}^{\mathcal{U}}G_\gamma \Big)$, the ultraproduct $\prod_{\gamma\in \Gamma}^{\mathcal{U}}\ell_1(G_\gamma)$ sits naturally as a subspace of the dual space of $\prod_{\gamma\in \Gamma}^{\mathcal{U}}c_0(G_\gamma)$. Consequently, we may view the composite map $\varphi_0^*\circ \varphi_1$ as the identity map on the space $\ell_1(\prod_{\gamma\in \Gamma}^{\mathcal{U}}G_\gamma )$ and so we may naturally identify this space with a~complemented subspace of $\prod_{\gamma\in \Gamma}^{\mathcal{U}}\ell_1(G_\gamma)$. It follows that the unique bounded linear map
\begin{equation}\label{homomorphismG}h_G\colon {\prod_{\gamma\in \Gamma}}^{\mathcal{U}}\ell_1(G_\gamma)\to \ell_1\Big({\prod_{\gamma\in \Gamma}}^{\mathcal{U}}G_\gamma \Big) \end{equation}
which satisfies $$h_G\Big(\big[(e_{g_\gamma})_{\gamma\in\Gamma}\big]\Big) = e_{[(g_\gamma)_{\gamma\in\Gamma}]}\qquad \Big(\big[(e_{g_\gamma})_{\gamma\in\Gamma}\big]\in {\prod_{\gamma\in \Gamma}}^{\mathcal U}\ell_1(G_\gamma)\Big)$$ is a contractive, surjective algebra homomorphism.

\subsection{Fredholm theory of operators on Banach spaces}
Let $E$ be a Banach space and denote by $\mathcal{B}(E)$ the Banach algebra of all bounded operators on $E$. The set $\mathcal{K}(E)$ of compact operators on $E$ is a closed ideal of $\mathcal{B}(E)$ and so we may form the quotient algebra $\mathcal{Q}(E)=\mathcal{B}(E)/\mathcal{K}(E)$, that we will often refer to as the \emph{Calkin algebra} of $E$. In this section we gather basic facts from Fredholm theory of operators on Banach spaces (we refer the reader to Arveson's book \cite[Chapter 3]{arveson} for a~beautiful exposition of the topic). \smallskip

An operator $T\in \mathcal{B}(E)$ is \emph{Fredholm} if the dimensions of $\ker T$ and $E / T[E]$ are finite, in which case we define the number $${\rm ind}\, T =  \dim \ker T - \dim E / T[E] \in \mathbb{Z},$$ called \emph{the Fredholm index} of $T$. The set comprising all Fredholm operators in $\mathcal{B}(E)$ is open. Certainly, every invertible operator has Fredholm index 0. Let us record the following simple lemma for the future reference.

\begin{lemma}\label{index0approx}Every index-zero Fredholm operator may be approximated by invertible operators. \end{lemma}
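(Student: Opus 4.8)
The plan is to realise $T$ as a norm-limit of invertible operators obtained from $T$ by a finite-rank perturbation supported on the ``kernel part''. Put $n=\dim\ker T$; since ${\rm ind}\,T=0$ we also have $\dim E/T[E]=n$. First I would set up the relevant decompositions. As $\ker T$ is finite-dimensional, a Hahn--Banach argument provides a bounded projection $P\colon E\to\ker T$; write $M:=\ker P$, so that $E=\ker T\oplus M$ with $M$ a closed (hence complete) subspace. Fix an algebraic complement $N$ of $T[E]$ in $E$; then $\dim N=n$, and since $T[E]=T[\ker T\oplus M]=T[M]$, the linear map $\Phi\colon M\oplus N\to E$, $\Phi(m,y)=Tm+y$, is a continuous bijection between Banach spaces (injectivity: if $Tm+y=0$ then $Tm=-y\in T[E]\cap N=\{0\}$, whence $m\in M\cap\ker T=\{0\}$; surjectivity: $T[M]+N=T[E]+N=E$). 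By the open-mapping theorem $\Phi$ is an isomorphism; in particular $T[E]=T[M]$ is closed, $E=T[E]\oplus N$ is a topological direct sum, and $T|_M\colon M\to T[E]$ is an isomorphism.

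Next, choose any linear isomorphism $\iota\colon\ker T\to N$ --- this is possible because $\dim\ker T=n=\dim N$, and it is automatically bounded, being a map between finite-dimensional spaces --- and set $S:=\iota\circ P$, regarded as an operator in $\mathcal{B}(E)$ via the inclusion $N\subseteq E$; clearly $S$ has finite rank. For a nonzero scalar $t$, put $R_t:=T+tS$. The key point is that $R_t$ is bijective for every $t\neq 0$. Indeed, writing $x=u+m$ with $u\in\ker T$, $m\in M$, we have $R_t x=Tm+t\iota u$ with $Tm\in T[E]$ and $t\iota u\in N$; since $E=T[E]\oplus N$, this vanishes precisely when $Tm=0$ and $\iota u=0$, that is (as $T|_M$ and $\iota$ are injective) precisely when $m=u=0$, so $R_t$ is injective. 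For surjectivity, given $y\in E$ decompose $y=y_1+y_2$ with $y_1\in T[E]$, $y_2\in N$, use surjectivity of $T|_M$ and of $\iota$ to solve $Tm=y_1$ and $t\iota u=y_2$, and observe $R_t(u+m)=y$.

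By the open-mapping theorem, bijectivity gives $R_t\in{\rm GL}\,\mathcal{B}(E)$ for every $t\neq 0$. Since $\|R_t-T\|=|t|\,\|S\|\longrightarrow 0$ as $t\to 0$, the operator $T$ lies in the closure of ${\rm GL}\,\mathcal{B}(E)$, which is exactly the assertion.

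I do not expect a genuine obstacle: once the two topological direct sum decompositions $E=\ker T\oplus M$ and $E=T[E]\oplus N$ are available, the perturbation argument is routine. The only point requiring a little care is the closedness and complementedness of the range $T[E]$ --- this is precisely where the hypothesis $\dim E/T[E]<\infty$ is used, and it is obtained above by applying the open-mapping theorem to $\Phi$.
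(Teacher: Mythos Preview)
Your argument is correct and follows essentially the same route as the paper: a projection $P$ onto $\ker T$, an isomorphism from $\ker T$ onto a complement $N$ of $T[E]$, and the perturbations $T+t\,\iota P$ converging to $T$. The paper states the invertibility of these perturbations without justification, whereas you supply the details (including the closedness and complementation of $T[E]$ via the open-mapping theorem), so your write-up is, if anything, more complete.
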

\begin{proof}Let $E$ be a Banach space and let $T\in \mathcal{B}(E)$ be a Fredholm operator of index 0. Let $P$ be a projection onto $\ker T$ and let $Q$ be a projection onto $E/T[E]\subseteq E$. Since $P[E]$ and $Q[E]$ have equal (finite) dimensions, we may find an isomorphism $U$ from $P[E]$ onto $Q[E]$. Then the operators $T + r UP$ ($r >0$) are invertible and converge to $T$ as $r\to 0^+$.\end{proof}

\section{Permanence properties of algebras with open multiplication}\label{permanence}
It is rather disappointing that openness of multiplication does not pass to closed subalgebras, even in the commutative case. An example of such phenomenon is exemplified by the algebra $C(\Delta)$ of all continuous functions on the Cantor set $\Delta$, which has uniformly open multiplication (Proposition~\ref{0dim}). Every compact metric space is a continuous image of $\Delta$, so there exists a continuous surjection $\theta\colon \Delta \to [0,1]^2$. The map $h_\theta (f)= f\circ \theta$ ($f\in C([0,1]^2)$) is an isometric algebra homomorphism, so the range of $h_\theta$ is isometrically isomorphic to $C([0,1]^2)$ but multiplication in this algebra is not open (see Remark~\ref{dim2}). Nevertheless, Banach algebras with (uniformly) open multiplication enjoy other permanence properties.

\subsection{Quotients, direct products and ultraproducts}
\begin{lemma}\label{quotient}Let $A$ and $B$ be Banach algebras and let $J\subseteq A$ be a closed ideal.
\begin{romanenumerate}
\item\label{it1} If $A$ has (uniformly, weakly) open multiplication, then so has $A/J$. Moreover, in both cases the dependence of $\delta$ from $\varepsilon$ in the quotient algebra is the same as in $A$.
\item\label{it2} If $h\colon A\to B$ is a surjective homomorphism and $A$ has (uniformly open) multiplication, then so has $B$.
\end{romanenumerate}
\end{lemma}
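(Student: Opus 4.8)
The plan is to reduce both parts to two standard facts: first, the canonical quotient map $q\colon A\to A/J$ satisfies the \emph{exact} equality $q[B_A(x,\varepsilon)]=B_{A/J}(q(x),\varepsilon)$ for every $x\in A$ and $\varepsilon>0$, which is merely a restatement of the definition of the quotient norm; secondly, a bounded surjective linear map between Banach spaces is uniformly open, with a constant independent of the centre of the ball (the classical open-mapping theorem). Throughout write $m_C(c,d)=cd$ for multiplication in a Banach algebra $C$ and recall that any algebra homomorphism $\psi\colon C\to D$ intertwines multiplications, that is $\psi\circ m_C=m_D\circ(\psi\times\psi)$; in particular $q[\,m_A[U\times V]\,]=m_{A/J}[\,q[U]\times q[V]\,]$ for all $U,V\subseteq A$.

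For \romanref{it1}, fix $\bar a,\bar b\in A/J$ and $\varepsilon>0$ and choose arbitrary representatives $a,b\in A$. Openness of $m_A$ at $(a,b)$ yields $\delta>0$ with $B_A(ab,\delta)\subseteq m_A[B_A(a,\varepsilon)\times B_A(b,\varepsilon)]$. Applying $q$ and using the two facts just quoted, one obtains
$$B_{A/J}(\bar a\bar b,\delta)=q[B_A(ab,\delta)]\subseteq m_{A/J}\big[B_{A/J}(\bar a,\varepsilon)\times B_{A/J}(\bar b,\varepsilon)\big],$$
so multiplication in $A/J$ is open; and since the $\delta$ produced here is literally the one coming from $A$ --- which, in the uniformly open case, does not depend on $a,b$ and hence not on $\bar a,\bar b$ --- the claim about the dependence of $\delta$ on $\varepsilon$ is immediate. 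The weakly open case is handled likewise: $q\times q$ is a surjective open map, so for a non-empty open $W\subseteq A/J\times A/J$ the preimage $(q\times q)^{-1}[W]$ is non-empty and open, the image of this set under $m_A$ has non-empty interior by hypothesis, and pushing it forward by the open map $q$ --- using $q\circ m_A=m_{A/J}\circ(q\times q)$ and the surjectivity of $q\times q$ --- shows that $m_{A/J}[W]=q[\,m_A[(q\times q)^{-1}[W]]\,]$ has non-empty interior.

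For \romanref{it2} I would factor $h=\widetilde h\circ q$, where $q\colon A\to A/\ker h$ is the quotient map and $\widetilde h\colon A/\ker h\to B$ is the induced bounded bijective homomorphism, which is a topological isomorphism by the open-mapping theorem. By \romanref{it1} the algebra $A/\ker h$ has (uniformly) open multiplication, so it remains to verify that a topological isomorphism $\phi\colon C\to D$ of Banach algebras transports (uniformly) open multiplication. This is the same ball-chasing: the estimates $\|\phi(x)\|\leqslant\|\phi\|\,\|x\|$ and $\|\phi(x)\|\geqslant\|x\|/\|\phi^{-1}\|$ give the inclusions $\phi[B_C(x,r)]\subseteq B_D(\phi(x),\|\phi\|r)$ and $\phi[B_C(x,r)]\supseteq B_D(\phi(x),r/\|\phi^{-1}\|)$, and feeding these into the argument of \romanref{it1} --- with $\varepsilon$ replaced by $\varepsilon/\|\phi\|$ on the domain side --- turns a $\delta$ valid in $C$ into $\delta/\|\phi^{-1}\|$ valid in $D$; since $\|\phi\|$ and $\|\phi^{-1}\|$ are fixed, the uniformly open case survives, with the dependence of $\delta$ on $\varepsilon$ deteriorating only by these constants.

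The argument is essentially bookkeeping with balls, so I do not expect a genuine obstacle; the one point that must not be glossed over is the use of the \emph{exact} identity $q[B_A(x,\varepsilon)]=B_{A/J}(q(x),\varepsilon)$ rather than a mere inclusion, since this is precisely what forces the $\delta$ obtained in the quotient to coincide with the one coming from $A$ --- and, in \romanref{it2}, keeping track that the open-mapping constant and the norm of $\widetilde h$ are independent of the points involved, which is what makes uniform openness pass through $h$.
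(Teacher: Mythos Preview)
Your proof is correct and follows essentially the same route as the paper: both arguments hinge on the exact equality $q[B_A(x,\varepsilon)]=B_{A/J}(q(x),\varepsilon)$ for part~\romanref{it1}, and on the factorisation $h=\widetilde h\circ q$ through a topological isomorphism for part~\romanref{it2}, with the same constants $\varepsilon/\|\phi\|$ and $\delta/\|\phi^{-1}\|$ appearing in the transport across $\widetilde h$. Your treatment of the weakly open case via pulling back an arbitrary open $W\subseteq A/J\times A/J$ is marginally more general in phrasing than the paper's (which works directly with products of balls), but the content is the same.
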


\begin{proof}
For \eqref{it1}, denote by $\kappa\colon A\to A/J$ be the canonical quotient map. By the very definition of the quotient norm we have $\kappa[B_A(x,r)]=B_{A/J}(\kappa(x),r)$
for any $r>0$. Now, if $\varepsilon>0$ and $\delta>0$ are such that $B_A(xy,\delta)\subseteq B_A(x,\varepsilon)\cdot B_A(y,\varepsilon)$
for some $x,y\in A$, then
$$
B_{A/J}(\kappa(xy),\delta)\subseteq B_{A/J}(\kappa(x),\varepsilon)\cdot B_{A/J}(\kappa(y),\varepsilon)
$$
which gives the assertion. \smallskip

As for weak openness, suppose that $B_A(x,\varepsilon) \cdot B_A(y,\varepsilon)$ has non-empty interior. As $\kappa$, being a surjection, is an open  map, $ B_{A/J}(\kappa(x),\varepsilon)\cdot B_{A/J}(\kappa(y),\varepsilon)$ has non-empty interior as well.\smallskip

For \eqref{it2}, since $B$ is isomorphic to $A/\ker h$, it is enough to prove the assertion in the case where $h$ is invertible. For $\varepsilon> 0$ and $\delta > 0$ corresponding to $\varepsilon / \|h\|$ in $A$ it is then enough to take $\delta^\prime = \delta / \|h^{-1}\|$.\end{proof}

\begin{lemma}\label{sums}Let $(A_\gamma)_{\gamma\in \Gamma}$ be a family of Banach algebras. Then multiplications in $A_\gamma$ $(\gamma\in \Gamma)$ are equi-uniformly open if and only if $A=\big(\! \bigoplus_{\gamma\in \Gamma} A_\gamma\big)_{\ell_\infty(\Gamma)}$ has uniformly open multiplication. Moreover, in the latter case multiplication in $A$ is uniformly open with $\delta$ depending on $\varepsilon >0$ in the same way as in $A_\gamma$ $(\gamma\in \Gamma)$.
\end{lemma}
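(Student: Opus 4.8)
The plan is to prove both implications by tracking the dependence of $\delta$ on $\varepsilon$ very explicitly, exploiting the fact that balls in the $\ell_\infty$-sum $A$ are, coordinatewise, contained in products of balls in the summands $A_\gamma$, and conversely that a coordinatewise choice of approximants of uniformly bounded norm assembles into an element of $A$. First I would fix notation: write $x=(x_\gamma)_{\gamma\in\Gamma}$, $y=(y_\gamma)_{\gamma\in\Gamma}\in A$, so that $xy=(x_\gamma y_\gamma)_{\gamma\in\Gamma}$, and recall that $\|z\|_A=\sup_{\gamma}\|z_\gamma\|$. The key elementary observation is that $B_A(x,\varepsilon)=\{(z_\gamma)_\gamma\in A:\ \sup_\gamma\|z_\gamma-x_\gamma\|<\varepsilon\}$, so in particular any $(z_\gamma)_\gamma$ with $z_\gamma\in B_{A_\gamma}(x_\gamma,\varepsilon)$ for every $\gamma$ and $\sup_\gamma\|z_\gamma-x_\gamma\|<\varepsilon$ lies in $B_A(x,\varepsilon)$; the strictness of the supremum inequality is the only subtlety and is handled by a routine rescaling argument (replace $\varepsilon$ by a slightly smaller $\varepsilon'$ at the cost of nothing, since the dependence $\delta(\varepsilon)$ may be assumed monotone).

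For the implication ``$(A_\gamma)$ equi-uniformly open $\Rightarrow$ $A$ uniformly open'': suppose there is a function $\varepsilon\mapsto\delta(\varepsilon)>0$, independent of $\gamma$, with $B_{A_\gamma}(x_\gamma y_\gamma,\delta(\varepsilon))\subseteq B_{A_\gamma}(x_\gamma,\varepsilon)\cdot B_{A_\gamma}(y_\gamma,\varepsilon)$ for all $\gamma$ and all $x_\gamma,y_\gamma$. Given $x,y\in A$ and $\varepsilon>0$, I claim $B_A(xy,\delta(\varepsilon'))\subseteq B_A(x,\varepsilon)\cdot B_A(y,\varepsilon)$ for suitable $\varepsilon'<\varepsilon$. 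Indeed, take $w=(w_\gamma)_\gamma$ with $\sup_\gamma\|w_\gamma-x_\gamma y_\gamma\|<\delta(\varepsilon')$; for each $\gamma$ apply the hypothesis in $A_\gamma$ to write $w_\gamma=u_\gamma v_\gamma$ with $u_\gamma\in B_{A_\gamma}(x_\gamma,\varepsilon')$, $v_\gamma\in B_{A_\gamma}(y_\gamma,\varepsilon')$. Then $\sup_\gamma\|u_\gamma\|\le\sup_\gamma\|x_\gamma\|+\varepsilon'<\infty$ and likewise for $(v_\gamma)_\gamma$, so $u=(u_\gamma)_\gamma$, $v=(v_\gamma)_\gamma\in A$, and $\sup_\gamma\|u_\gamma-x_\gamma\|\le\varepsilon'<\varepsilon$, $\sup_\gamma\|v_\gamma-y_\gamma\|\le\varepsilon'<\varepsilon$, whence $u\in B_A(x,\varepsilon)$, $v\in B_A(y,\varepsilon)$ and $w=uv$. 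This shows $A$ has uniformly open multiplication with the same $\delta$ (up to the harmless shift $\varepsilon\mapsto\varepsilon'$), which also yields the ``Moreover'' clause.

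For the converse ``$A$ uniformly open $\Rightarrow$ $(A_\gamma)$ equi-uniformly open'': fix $\gamma_0\in\Gamma$. The point is that $A_{\gamma_0}$ embeds isometrically into $A$ as a complemented ideal-like subspace (identify $a\in A_{\gamma_0}$ with the tuple supported at $\gamma_0$; but to stay inside the algebra structure one should instead use the unital trick or simply observe that the coordinate projection $\pi_{\gamma_0}\colon A\to A_{\gamma_0}$ is a surjective algebra homomorphism, and invoke Lemma~\ref{quotient}\eqref{it2}). Applying Lemma~\ref{quotient}\eqref{it2} to $\pi_{\gamma_0}$ gives that $A_{\gamma_0}$ has uniformly open multiplication; and because $\pi_{\gamma_0}$ is a contractive quotient map, inspecting the proof of Lemma~\ref{quotient} shows the resulting $\delta(\varepsilon)$ is the one inherited from $A$, hence does not depend on $\gamma_0$. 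Therefore the $A_\gamma$ are equi-uniformly open with the same $\delta$.

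The only genuine obstacle is bookkeeping around the strict inequalities defining open balls in the supremum norm: a coordinatewise family of approximants each strictly within $\varepsilon$ of its target need not have supremum strictly below $\varepsilon$. I expect to dispatch this by the standard device of shrinking $\varepsilon$ slightly (working with $\varepsilon/2$, say, and using that one may always assume $\delta(\cdot)$ nondecreasing), so that ``$\le\varepsilon/2$'' upgrades to ``$<\varepsilon$''. Everything else is a direct unwinding of definitions together with one invocation of Lemma~\ref{quotient}.
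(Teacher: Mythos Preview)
Your proof is correct and follows essentially the same route as the paper: coordinatewise factorisation for the forward direction, and the coordinate projection together with Lemma~\ref{quotient} for the converse. There is one small technical difference worth flagging. To handle the strict-inequality issue in the supremum norm, you shrink $\varepsilon$ to an arbitrary $\varepsilon'<\varepsilon$ and obtain $B_A(xy,\delta(\varepsilon'))\subseteq B_A(x,\varepsilon)\cdot B_A(y,\varepsilon)$; this proves uniform openness, but as written it only gives $\sup_{\varepsilon'<\varepsilon}\delta(\varepsilon')$ rather than $\delta(\varepsilon)$ itself, so the ``Moreover'' clause is established only up to left-limits. The paper instead exploits bilinearity of multiplication: from $B(ab,\delta)\subseteq B(a,\varepsilon)\cdot B(b,\varepsilon)$ one gets, by rescaling $a\mapsto\eta a$, $b\mapsto\eta b$ and dividing through, that $B(ab,\delta/\eta^2)\subseteq B(a,\varepsilon/\eta)\cdot B(b,\varepsilon/\eta)$ for every $\eta>1$. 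Applying this coordinatewise and taking the union over $\eta>1$ recovers exactly $B_A(xy,\delta)\subseteq B_A(x,\varepsilon)\cdot B_A(y,\varepsilon)$, so the very same $\delta(\varepsilon)$ works in $A$. This rescaling trick is the cleanest way to nail the ``Moreover'' clause literally; your $\varepsilon'$ device is adequate for uniform openness but slightly weaker on this point.
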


\begin{proof}
First, assume that multiplications in $A_\gamma$ ($\gamma\in \Gamma$) are equi-uniformly open. Given $\varepsilon>0$, let $\delta>0$ be such that $B(x_\gamma y_\gamma,\delta)\subseteq B(x_\gamma,\varepsilon)\cdot B(y_\gamma,\varepsilon)$ for all $x_\gamma,y_\gamma\in A_\gamma$ ($\gamma\in \Gamma$). Consider $(x_\gamma)_{\gamma\in \Gamma},(y_\gamma)_{\gamma\in \Gamma}\in A$ and observe that
\begin{align*}
B\Big((x_\gamma y_\gamma)_{\gamma\in \Gamma},\frac{\delta}{\eta^2}\Big) &\subseteq\prod_{\gamma\in \Gamma} B\Big(x_\gamma y_\gamma,\frac{\delta}{\eta^2}\Big)\subseteq\prod_{\gamma\in \Gamma} B\Big(x_\gamma,\frac{\varepsilon}{\eta}\Big)\cdot B\Big(y_\gamma,\frac{\varepsilon}{\eta}\Big)\\
&\subseteq B\Big((x_\gamma)_{\gamma\in \Gamma},\varepsilon\Big)\cdot B\Big((y_\gamma)_{\gamma\in \Gamma},\varepsilon\Big)
\end{align*}
for any $\eta>1$. Hence
$$
B\Big((x_\gamma y_\gamma)_{\gamma\in \Gamma},\delta\Big)= \bigcup_{\eta>1}B\Big((x_\gamma y_\gamma)_{\gamma\in \Gamma},\frac{\delta}{\eta^2}\Big)\subseteq B\Big((x_\gamma)_{\gamma\in \Gamma},\varepsilon\Big)\cdot B\Big((y_\gamma)_{\gamma\in \Gamma},\varepsilon\Big).
$$

Now, assume that multiplication in $A$ is uniformly open. It is easy to see that the projection onto any coordinate is a surjective homomorphism of Banach algebras and Lemma \ref{quotient} gives the assertion.
\end{proof}

\begin{corollary}\label{ultraproductlemma}
Let $(A_\gamma)_{\gamma\in \Gamma}$ be a family of Banach algebras and let $\mathcal U$ be an ultrafilter on the set $\Gamma$. If multiplications in $A_\gamma$ $(\gamma\in \Gamma)$ are equi-uniformly open, then the ultraproduct $\prod_{\gamma\in \Gamma}^{\mathcal U} A_\gamma$ has uniformly open multiplication.
\end{corollary}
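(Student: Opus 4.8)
The plan is to read off the corollary as a formal consequence of the two preceding lemmas, since by construction the ultraproduct is nothing but a quotient of the $\ell_\infty$-sum by a closed ideal. Concretely, set $A=\big(\!\bigoplus_{\gamma\in\Gamma}A_\gamma\big)_{\ell_\infty(\Gamma)}$ and $J=c_0^{\mathcal U}(A_\gamma)_{\gamma\in\Gamma}$; as recalled in the preliminaries, $J$ is a closed ideal of the Banach algebra $A$ (with coordinatewise multiplication) and $\prod_{\gamma\in\Gamma}^{\mathcal U}A_\gamma=A/J$ as Banach algebras.

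First I would apply Lemma~\ref{sums}: the hypothesis that the multiplications in the $A_\gamma$ are equi-uniformly open gives that multiplication in $A$ is uniformly open, and moreover that $\delta$ depends on $\varepsilon>0$ in $A$ in exactly the same way as in the factors $A_\gamma$. Second, since $J\subseteq A$ is a closed ideal, I would invoke Lemma~\ref{quotient}\romanref{it1}, which transfers uniform openness of multiplication from $A$ to the quotient $A/J$ and, crucially, does so with the \emph{same} dependence of $\delta$ on $\varepsilon$. Combining the two steps, multiplication in $A/J=\prod_{\gamma\in\Gamma}^{\mathcal U}A_\gamma$ is uniformly open, with the same $\delta$–$\varepsilon$ modulus as in the $A_\gamma$; this completes the proof.

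There is essentially no obstacle here: all the real work has already been carried out in Lemmas~\ref{sums} and~\ref{quotient}, and the corollary is a one-line composition of them. The only point deserving care is bookkeeping—making sure that the modulus of uniform openness is preserved at both stages, so that one really lands at \emph{uniform} openness of the ultraproduct rather than merely openness; both lemmas are stated with precisely this tracking, so the argument goes through verbatim. (It is also worth noting that one may as well record the stronger statement that, for a family $(\mathcal U_j)_{j}$ of ultrafilters, the corresponding ultraproducts are themselves equi-uniformly open, with the common modulus inherited from the $A_\gamma$; this follows by the same two-step argument applied uniformly in $j$.)
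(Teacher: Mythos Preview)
Your proof is correct and follows exactly the paper's approach: the ultraproduct is by definition the quotient of $\big(\!\bigoplus_{\gamma\in\Gamma}A_\gamma\big)_{\ell_\infty(\Gamma)}$ by the closed ideal $c_0^{\mathcal U}(A_\gamma)_{\gamma\in\Gamma}$, so Lemma~\ref{sums} followed by Lemma~\ref{quotient}\romanref{it1} gives the result. Your additional remarks about preservation of the modulus and the equi-uniform version for families of ultrafilters are correct elaborations, but the core argument is identical to the paper's one-line proof.
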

\begin{proof}The ultraproduct is, by the very definition, a quotient of $\big(\! \bigoplus_{\gamma\in \Gamma} A_\gamma\big)_{\ell_\infty(\Gamma)}$. \end{proof}

\subsection{Completions and direct limits}
Of course, one may talk about (uniform) openness of multiplication in normed algebras that are not necessarily complete. The following result says that completions of normed algebras with uniformly open multiplication have uniformly open multiplication too.

\begin{proposition}\label{completion}Suppose that $A$ is a Banach algebra that contains a dense subalgebra $A_0$ such that multiplication restricted to this subalgebra is uniformly open. Then $A$ has uniformly open multiplication. \end{proposition}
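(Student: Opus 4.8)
The plan is to deduce the statement from Schauder's lemma (Lemma~\ref{dek}) applied to the multiplication map $m\colon A\times A\to A$, $m(x,y)=xy$. I would equip $A\times A$ with the maximum metric, so that it is complete and $B_{A\times A}((x,y),\varepsilon)=B_A(x,\varepsilon)\times B_A(y,\varepsilon)$, whence $m[B_{A\times A}((x,y),\varepsilon)]=B_A(x,\varepsilon)\cdot B_A(y,\varepsilon)$; since $m$ is jointly continuous, it then suffices to verify that $m$ is uniformly \emph{almost} open, i.e.\ that for every $\varepsilon>0$ there is $\delta>0$ with
$$B_A(xy,\delta)\subseteq\overline{B_A(x,\varepsilon)\cdot B_A(y,\varepsilon)}\qquad(x,y\in A).$$

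To produce such a $\delta$, I would fix $\varepsilon>0$ and take $\delta>0$ from the uniform openness of multiplication on $A_0$ for the parameter $\varepsilon/4$, so that $B_{A_0}(uv,\delta)\subseteq B_{A_0}(u,\varepsilon/4)\cdot B_{A_0}(v,\varepsilon/4)$ for all $u,v\in A_0$. Given $x,y\in A$ and $z\in B_A(xy,\delta)$, the idea is: first approximate $x$ and $y$ very finely by $u,v\in A_0$ --- fine enough that $\|uv-xy\|$, which is at most $\eta(\|x\|+\|y\|+\eta)$ when $\|u-x\|,\|v-y\|<\eta$, is smaller than the slack $\delta-\|z-xy\|$; next approximate $z$ itself by some $z_0\in A_0$, closely enough that $z_0$ still lies in $B_{A_0}(uv,\delta)$ and within a prescribed distance $\rho$ of $z$; finally apply the $A_0$-inclusion to write $z_0=u'v'$ with $u',v'\in A_0$ at distance $<\varepsilon/4$ from $u,v$, hence (since $\eta<\varepsilon/4$) at distance $<\varepsilon$ from $x,y$. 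Then $z_0=u'v'\in B_A(x,\varepsilon)\cdot B_A(y,\varepsilon)$ and $\|z_0-z\|<\rho$; as $\rho>0$ was arbitrary, $z\in\overline{B_A(x,\varepsilon)\cdot B_A(y,\varepsilon)}$, which is the desired inclusion.

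The subtle point --- essentially the whole content --- is the uniformity: the $\delta$ witnessing the Schauder condition for $m$ must not depend on $x$ and $y$. This works because a single small parameter is split into two independent jobs. The parameter $\eta$ of the $A_0$-approximation of $x,y$ has only to beat the amount of slack $\delta-\|z-xy\|$, which is already fixed once $\delta$ is chosen; by density of $A_0$ this can be arranged for any given $x,y$ regardless of how large $\|x\|,\|y\|$ are, even though the product error $\eta(\|x\|+\|y\|+\eta)$ depends on those norms. On the other hand, the requirement of landing inside the $\varepsilon$-balls costs only the crude bound $\eta<\varepsilon/4$, which does not interact with $\|x\|$ and $\|y\|$ at all. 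Once this separation is in place the rest is routine triangle-inequality bookkeeping, and Lemma~\ref{dek} delivers uniform openness of $m$ on all of $A\times A$.
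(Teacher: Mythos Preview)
Your argument is correct and follows essentially the same route as the paper's proof: approximate $x,y$ by elements $u,v\in A_0$, invoke the uniform openness in $A_0$, use density of $A_0$ to pass to the closure, and then apply Schauder's lemma. The paper phrases the middle step as a chain of set inclusions (with parameters $\varepsilon/2$ and $\delta/2$) rather than your pointwise treatment with $\varepsilon/4$, but the content and structure are the same.
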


\begin{proof}Let $\varepsilon>0$ and choose $\delta >0$ corresponding  to $\varepsilon/2$. Take $x,y\in A$ and $x^\prime, y^\prime \in A_0$ such that $\|x-x^\prime\|, \|y-y^\prime\| < \varepsilon/2$ and $\|xy-x^\prime y^\prime\|<\delta / 2$. We have
$$ B(x^\prime y^\prime, \delta) \cap A_0 = B_{A_0}(x^\prime y^\prime, \delta) \subseteq  B_{A_0}(x^\prime, \varepsilon/2)\cdot B_{A_0}(y^\prime, \varepsilon/2) \\ 
 \subseteq   B(x, \varepsilon)\cdot B(y, \varepsilon)$$
so $$B(xy,\delta/2)\subseteq \overline{B(x^\prime y^\prime, \delta)} \subseteq \overline{B(x, \varepsilon)\cdot B(y, \varepsilon)}.$$ By Schauder's lemma (Lemma~\ref{dek}), it is uniformly open.\end{proof}

\begin{remark}In general, it is not possible to replace the hypothesis of uniform openness on a dense set by mere openness on a dense set to conclude openness everywhere. Indeed, let $A=M_n$ be the algebra of $n\times n$ matrices. Certainly, ${\rm GL}\, A$ is dense in $A$ as every matrix may be approached by a sequence of matrices that have non-zero eigen-values (see also Proposition~\ref{index0}). However, as ${\rm  GL}\,A$ is also open, multiplication restricted to the (dense) invertible group is open, yet it not open in the case where $n\geqslant 4$ as shown by Behrends \cite[p.~172 and Proposition 3.2(iv)]{be3}.\smallskip\end{remark}

\begin{corollary}Suppose that $(A_\gamma)_{\gamma\in \Gamma}$ is a net of Banach algebras directed by inclusion that have equi-uniformly open multiplications. Then the direct limit of $(A_\gamma)_{\gamma\in \Gamma}$ has uniformly open multiplication. \end{corollary}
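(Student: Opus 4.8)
The plan is to realise the direct limit as a completion and then invoke Proposition~\ref{completion}. Set $A_0=\bigcup_{\gamma\in\Gamma}A_\gamma$. Since the net is directed by inclusion, any finite subset of $A_0$ is contained in some single $A_\gamma$, so the algebra operations are unambiguously defined on $A_0$ and, the connecting inclusions $A_\gamma\hookrightarrow A_{\gamma'}$ being isometric, $A_0$ carries a well-defined algebra norm. By construction the direct limit $A$ is the completion of $A_0$, hence $A_0$ is a dense subalgebra of $A$; so by Proposition~\ref{completion} it is enough to verify that multiplication restricted to $A_0$ is uniformly open.

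To that end, fix $\varepsilon>0$ and let $\delta>0$ be a common constant witnessing the equi-uniform openness of multiplication in the $A_\gamma$, i.e.\ $B(x_\gamma y_\gamma,\delta)\subseteq B(x_\gamma,\varepsilon)\cdot B(y_\gamma,\varepsilon)$ computed in $A_\gamma$ for every $\gamma\in\Gamma$ and all $x_\gamma,y_\gamma\in A_\gamma$. Given $x,y\in A_0$, pick $\gamma$ with $x,y\in A_\gamma$ and let $z\in A_0$ with $\|z-xy\|<\delta$. By directedness there is $\gamma'\geqslant\gamma$ with $z\in A_{\gamma'}$; as the inclusions are isometric we have $z\in B_{A_{\gamma'}}(xy,\delta)$, and therefore $z\in B_{A_{\gamma'}}(x,\varepsilon)\cdot B_{A_{\gamma'}}(y,\varepsilon)\subseteq B_{A_0}(x,\varepsilon)\cdot B_{A_0}(y,\varepsilon)$. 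Hence $B_{A_0}(xy,\delta)\subseteq B_{A_0}(x,\varepsilon)\cdot B_{A_0}(y,\varepsilon)$ with $\delta$ depending only on $\varepsilon$, so multiplication on $A_0$ is uniformly open. Proposition~\ref{completion} now gives that $A$ has uniformly open multiplication.

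The only genuinely delicate point is the bookkeeping in the second paragraph: one must ensure that the pair $x,y$ together with the approximant $z$ all live inside a single member $A_{\gamma'}$ of the net, and that all balls appearing are taken with respect to mutually compatible norms. Both are immediate from directedness by inclusion and from the connecting maps being isometric, so I do not anticipate any serious obstacle beyond keeping this bookkeeping straight.
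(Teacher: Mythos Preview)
Your proof is correct and follows the same route as the paper: the paper's proof is the single line ``The normed algebra $\bigcup_{\gamma\in \Gamma} A_\gamma$ has uniformly open multiplication,'' after which Proposition~\ref{completion} applies. You have simply unpacked that line, supplying the directedness-and-bookkeeping argument that the paper leaves implicit.
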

\begin{proof}The normed algebra $\bigcup_{\gamma\in \Gamma} A_\gamma$ has uniformly open multiplication. \end{proof}

\section{The results}
The following result is a handy criterion for non-opennesses of multiplication in various Banach algebras.  

\begin{proposition}\label{multiplication}
Let $A$ be a unital Banach algebra. Suppose that multiplication in $A$ is an open mapping. Then ${\rm tsr}\,A = 1$.\end{proposition}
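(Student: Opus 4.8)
The plan is to argue by contradiction and to use openness of multiplication $m$ in its \emph{global} form (images of open subsets of $A\times A$ are open), rather than in the pointwise reformulation. First I would reduce the target: by Proposition~\ref{rieffel}, the condition ${\rm tsr}\,A=1$ is equivalent to density of the set $R$ of right-invertible elements of $A$, so it suffices to prove that $R$ is dense. Assume it is not; then $A\setminus\overline R$ is non-empty and open, and I fix a non-empty open set $V\subseteq A$ with $V\cap R=\emptyset$.

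Next I would consider the open set $V\times A\subseteq A\times A$ and apply openness of multiplication to see that $W:=m[V\times A]=\{ab:a\in V,\,b\in A\}$ is an open subset of $A$. Two properties of $W$ yield the contradiction. First, $0\in W$, since $0=a\cdot 0$ for any fixed $a\in V$; here it is essential that the second factor is allowed to range over all of $A$. Second, $W\cap R=\emptyset$: if some $ab\in W$ were right-invertible, say $(ab)c=1$, then $a(bc)=1$, so $a$ would be right-invertible, contradicting $a\in V$.

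To finish I would invoke unitality of $A$: an open neighbourhood of $0$ contains $t\cdot 1$ for every sufficiently small scalar $t\neq 0$, and each such $t\cdot 1$ lies in ${\rm GL}\,A\subseteq R$. Hence $W$ meets $R$, contradicting the preceding paragraph. Therefore $R$ is dense, and ${\rm tsr}\,A=1$.

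I expect the one genuinely delicate point to be the decision to work with \emph{right}-invertible elements rather than with ${\rm GL}\,A$ directly. It is not enough to know that ${\rm GL}\,A$ itself fails to be dense: an element lying outside $\overline{{\rm GL}\,A}$ may still be right-invertible (as happens, e.g., in $\mathcal{B}(\ell_2)$), in which case the products $ab$ above need not avoid ${\rm GL}\,A$ and the argument of the second paragraph would break down. Proposition~\ref{rieffel} is precisely the input that lets us choose $V$ disjoint from the (likewise non-dense) set $R$; once that is in place, the remainder is short and essentially formal.
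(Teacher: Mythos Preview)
Your proof is correct and follows essentially the same approach as the paper: both use Proposition~\ref{rieffel} to pass to right-invertibles, pick an open set $V$ (the paper's $B(f,\varepsilon)$) disjoint from the right-invertibles, observe that products with the second factor containing $0$ form a set containing $0$ but no right-invertible element, and finish by noting that small scalar multiples of $1_A$ are invertible. The only cosmetic difference is that you use the global formulation of openness on $V\times A$, whereas the paper works at the single point $(f,0)$ with the product of balls $B(f,\varepsilon)\cdot B(0,\varepsilon)$; your closing remark about why right-invertibles (rather than ${\rm GL}\,A$) are needed is exactly the point the paper leaves implicit.
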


\begin{proof}
We proceed by contraposition so let us assume that ${\rm GL}\,A$ is not dense in $A$. Then, by Proposition~\ref{rieffel}, the set ${\rm RI}\,A$ of all right-invertible elements of $A$ is not dense in $A$. Take $f\in A\setminus\overline{{\rm RI}\,A}$. We \emph{claim} that multiplication in $A$ is not open at $(f,0)$.\smallskip

Indeed, by the hypothesis, there is $\varepsilon>0$ such that the intersection $B(f,\varepsilon)\cap\overline{{\rm RI}\,A}$ is empty. The set $B=B(f,\varepsilon)\cdot B(0,\varepsilon)$ consists of elements that are not right-invertible and, clearly, $0\in B$. However, $0$ does not belong to the interior of $B$ as for any $\delta>0$ the element $\delta/2 \cdot 1_A\in B(0,\delta)$ is invertible.
\end{proof}

\begin{proposition}\label{weaklyopen}
Let $A$ be a unital Banach algebra. Then multiplication is open at points in the sets $A\times\mathrm{GL}\, A, \, \mathrm{GL}\, A\times A.$
\end{proposition}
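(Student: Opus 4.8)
The plan is to exploit the fact that multiplication by a fixed invertible element is a homeomorphism, reducing the problem near a point $(x,y)$ with $y$ invertible to openness near $(xy, 1)$, and then to use a direct computation near the identity. Concretely, I would first fix $x \in A$ and $y \in \mathrm{GL}\,A$ and set $z = xy$. The right-multiplication map $R_{y^{-1}} \colon a \mapsto ay^{-1}$ and $R_{y} \colon a \mapsto ay$ are mutually inverse homeomorphisms of $A$, so it suffices to produce, for each $\varepsilon > 0$, a $\delta > 0$ with $B(z,\delta) \subseteq B(x,\varepsilon)\cdot B(y,\varepsilon)$. Writing an arbitrary $w \in B(z,\delta)$ as $w = (wy^{-1})\cdot y$, we see $wy^{-1} \in B(x, \|y^{-1}\|\,\delta)$ (since $\|wy^{-1} - x\| = \|wy^{-1} - zy^{-1}\| \leqslant \|y^{-1}\|\,\|w-z\|$), so choosing $\delta < \varepsilon / \|y^{-1}\|$ already gives $w \in B(x,\varepsilon)\cdot \{y\} \subseteq B(x,\varepsilon)\cdot B(y,\varepsilon)$. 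Hence multiplication is open at $(x,y)$ for every $x \in A$, $y \in \mathrm{GL}\,A$; in fact this argument shows more, that we need not even perturb $y$.

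For the symmetric case $\mathrm{GL}\,A \times A$, I would run the identical argument with left multiplication $L_{x^{-1}}\colon a \mapsto x^{-1}a$ in place of $R_{y^{-1}}$: given $x \in \mathrm{GL}\,A$, $y \in A$, set $z = xy$, write $w = x\cdot(x^{-1}w)$, note $x^{-1}w \in B(y, \|x^{-1}\|\,\|w - z\|)$, and take $\delta < \varepsilon / \|x^{-1}\|$.

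I do not expect any genuine obstacle here — the statement is essentially the remark already made in the introduction (that $x \mapsto xy$ is a homeomorphism for invertible $y$), recorded as a proposition because it is used repeatedly later. The only point requiring a line of care is that openness of a bilinear map $m$ at a point $(x,y)$ means: for every $\varepsilon>0$ there is $\delta>0$ with $B(m(x,y),\delta) \subseteq m[B(x,\varepsilon)\times B(y,\varepsilon)] = B(x,\varepsilon)\cdot B(y,\varepsilon)$; once this is unwound, the homeomorphism argument closes it immediately. One could phrase the whole thing in a single sentence by saying that $m(\,\cdot\,, y) = R_y$ is a homeomorphism of $A$ onto itself when $y$ is invertible, so $m[B(x,\varepsilon)\times\{y\}] = R_y[B(x,\varepsilon)]$ is an open neighbourhood of $xy = m(x,y)$, and dually for the first variable; I would probably opt for that streamlined formulation in the final writeup.
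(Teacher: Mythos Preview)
Your argument is correct and is essentially identical to the paper's: given $w$ near $xy$ with (say) $y$ invertible, factor $w=(wy^{-1})\cdot y$ and estimate $\|wy^{-1}-x\|\leqslant\|y^{-1}\|\,\|w-xy\|$, then choose $\delta$ accordingly (the paper takes $\delta=\varepsilon/(2\|x^{-1}\|)$ in the symmetric case, you take $\delta<\varepsilon/\|y^{-1}\|$). The only differences are cosmetic --- which factor you treat first and the extra commentary about $R_y$ being a homeomorphism --- so there is nothing to add.
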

\begin{proof}
Take $x,y\in A$ where $x$ is invertible. Given $\varepsilon>0$, set $\delta=\varepsilon/2\|x^{-1}\|$. We \emph{claim} that $B(xy,\delta)\subseteq B(x,\varepsilon)\cdot B(y,\varepsilon)$. Consider $z\in B(xy,\delta)$; plainly, $z=x\cdot x^{-1}z$ and $x\in B(x,\varepsilon)$. Moreover,
$$
\big\|x^{-1}z-y\big\|\leqslant\big\|x^{-1}\big\|\!\cdot\!\|z-xy\|\leqslant\frac{\varepsilon}{2}<\varepsilon.
$$
The other case is analogous.\end{proof}

\begin{corollary}\label{weaklyopendense}Suppose that $A$ is a unital Banach algebra for which ${\rm GL}\, A$ is dense in $A$. Then $A$ has weakly open multiplication. \end{corollary}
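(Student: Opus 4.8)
The plan is to combine the density of $\mathrm{GL}\,A$ with the pointwise openness statement of Proposition~\ref{weaklyopen}. Regarding multiplication as the bilinear map $m\colon A\times A\to A$, it suffices, by the definition of weak openness, to check that $m[U]$ has non-empty interior for $U$ ranging over a basis of the product topology; so I would fix $a,b\in A$ and $\varepsilon>0$ and show that $B(a,\varepsilon)\cdot B(b,\varepsilon)$ has non-empty interior.

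First I would use the density of $\mathrm{GL}\,A$ to pick an invertible element $x$ with $\|x-a\|<\varepsilon$, and put $\varepsilon'=\varepsilon-\|x-a\|>0$. Then $B(x,\varepsilon')\subseteq B(a,\varepsilon)$, and since $\varepsilon'\leqslant\varepsilon$ also $B(b,\varepsilon')\subseteq B(b,\varepsilon)$. Next I would apply Proposition~\ref{weaklyopen} at the point $(x,b)$ with $\varepsilon'$ in the role of $\varepsilon$: there is $\delta>0$ (concretely $\delta=\varepsilon'/(2\|x^{-1}\|)$) with $B(xb,\delta)\subseteq B(x,\varepsilon')\cdot B(b,\varepsilon')$. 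Chaining the inclusions yields $B(xb,\delta)\subseteq B(a,\varepsilon)\cdot B(b,\varepsilon)$, so the latter set contains a non-empty open ball and hence has non-empty interior, which is exactly what is required.

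There is essentially no obstacle here: the argument is a direct composition of Proposition~\ref{weaklyopen} with the density hypothesis, and the only point needing care is to shrink the radius around the invertible approximant $x$ from $\varepsilon$ to $\varepsilon-\|x-a\|$, so that $B(x,\varepsilon')$ remains inside $B(a,\varepsilon)$; without this one would only land in a slightly larger product of balls. One could phrase the same proof more abstractly by observing that $m$ is open at every point of the dense subset $\mathrm{GL}\,A\times A$ of $A\times A$, and that a continuous map which is open at a dense set of points is automatically weakly open, but the explicit estimate above is equally short.
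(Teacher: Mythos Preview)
Your argument is correct and is exactly the intended proof: the paper states the corollary without proof, as an immediate consequence of Proposition~\ref{weaklyopen}, and your write-up just makes the obvious deduction explicit (pick an invertible $x\in B(a,\varepsilon)$, shrink the radius, and apply the proposition at $(x,b)$). The abstract reformulation you mention at the end---that $m$ is open at every point of the dense set $\mathrm{GL}\,A\times A$---is precisely how the paper expects the reader to see it.
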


The converse statement is false as multiplication in the real algebra $C[0,1]$ is weakly open \cite[Theorem 5]{bww}, however invertible elements are not dense in $C[0,1]$.

\subsection{Openness of multiplication in $C(X)$ and in semigroup algebras}
\begin{proposition}\label{dim2}Let $X$ be a compact Hausdorff space. If $\dim X \geqslant 2$, then multiplication in $C(X)$ is not open. \end{proposition}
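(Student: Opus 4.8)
The plan is to reduce the problem to the known obstruction in dimension two by exhibiting a pair of functions at which multiplication fails to be open, using a factorisation-type argument through the two-sphere (or through $[0,1]^2$). Concretely, by the hypothesis $\dim X\geqslant 2$ there is, by classical dimension theory (the characterisation of covering dimension via essential maps onto cubes), a closed subset $Y\subseteq X$ and a continuous surjection $\varphi\colon Y\to S^2$, or equivalently an essential map from $X$ onto $[0,1]^2$; by Tietze's theorem this extends to a map $X\to \mathbb{R}^2$, and the essentialness is precisely what prevents a certain perturbation problem from being solvable. First I would fix the candidate point: take $g=(g_1,g_2)\colon X\to\mathbb{R}^2$ essential, set $f_1=g_1$, $f_2=g_2$ (in the complex case one packages this as a single complex function $f=g_1+ig_2$ on $X$ together with its companion, or works with $f\overline{f}$-type obstructions), and consider multiplication near the pair $(f_1,f_2)$ whose product vanishes on the zero set of $g$.

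The key steps, in order, would be: (i) recall from dimension theory that $\dim X\geqslant 2$ yields an essential map $g\colon X\to[0,1]^2$, i.e.\ one that cannot be perturbed off the centre without moving the boundary, equivalently the map $X\setminus g^{-1}(\text{pt})\to[0,1]^2\setminus\{\text{pt}\}$ carries nontrivial topology; (ii) translate so that the omitted point is the origin and observe that the product $h=f_1 f_2$ (real case) or $|f|^2$ has a controlled zero set $Z$; (iii) choose $\varepsilon>0$ small and produce, for every $\delta>0$, a function $k\in B(h,\delta)$ that cannot be written as a product $k=p\cdot q$ with $\|p-f_1\|<\varepsilon$, $\|q-f_2\|<\varepsilon$ — the point being that writing $k$ as such a product would furnish exactly the forbidden nowhere-zero perturbation of $g$, contradicting essentialness; (iv) for the complex scalar field, do the analogous construction using that a map $X\to\mathbb{C}$ with suitable winding/cohomological data cannot be factored through small perturbations, which is where the hypothesis $\dim X\geqslant 2$ (rather than $\geqslant 3$) is exactly right since $\mathbb{C}\cong\mathbb{R}^2$. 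Since this is flagged in the excerpt as an extension of Komisarski's real-scalar theorem to complex scalars, I would expect the proof to quote the real case and then handle the complex case by relating the complex obstruction to a $2$-dimensional real one.

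The main obstacle I anticipate is step (iii)/(iv): constructing the explicit near-product $k$ and proving rigorously that no $\varepsilon$-close factorisation exists. This is a genuinely topological argument — one must show that an $\varepsilon$-approximate factorisation would, after rescaling on the region where $|h|$ is small, produce a continuous nowhere-zero map homotopic to $g$ relative to the boundary data, contradicting that $g$ is essential. Getting the quantitative bookkeeping right (how small $\varepsilon$ must be relative to the modulus of essentialness, and how $\delta\to 0$ forces the perturbed factors to retain the topological obstruction) is the delicate part; everything else — Tietze extension, passing between $S^2$ and $[0,1]^2$, the reduction $B(1,1)\subseteq\mathrm{GL}$ remark, and the identification $\mathbb{C}\cong\mathbb{R}^2$ — is routine. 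I would also keep in mind that one only needs non-openness at a single pair, so there is freedom to choose the most convenient normal form for $g$.
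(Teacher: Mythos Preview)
Your approach is a direct, constructive attack in the spirit of Komisarski's original argument: pick an essential map arising from $\dim X\geqslant 2$, build a specific pair of functions, and show by a topological obstruction that no nearby factorisation exists. The paper does something entirely different and far shorter. It first proves the general fact (Proposition~\ref{multiplication}) that in any unital Banach algebra, openness of multiplication forces ${\rm tsr}\,A=1$, i.e.\ density of the invertible group. It then simply quotes the classical characterisation (Rieffel) that ${\rm GL}\,C(X)$ is dense in the complex algebra $C(X)$ if and only if $\dim X\leqslant 1$, and in the real algebra if and only if $\dim X=0$. Since $\dim X\geqslant 2$ rules out density of invertibles in either case, multiplication cannot be open. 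That is the entire proof.

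What each approach buys: the paper's route is a two-line reduction to known $K$-theoretic/dimension-theoretic facts, uniform over real and complex scalars, and it explains \emph{why} the obstruction appears (failure of ${\rm tsr}=1$) rather than exhibiting a single bad pair. Your route, if completed, would be self-contained and would identify an explicit point $(f_1,f_2)$ where openness fails, which is more informative locally but requires the delicate quantitative step you yourself flag as the main obstacle. Note also that your sketch is not yet a proof: the choice $h=f_1f_2$ in the real case and $|f|^2$ in the complex case does not by itself set up the right factorisation problem (for instance, $|f|^2$ is not a product of two elements of the complex algebra in the relevant sense), so the specific candidate pair would need to be rethought even before tackling step~(iii). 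The paper sidesteps all of this by working at the level of the invertible group.
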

\begin{proof}The complex algebra $C(X)$ has dense invertible group if and only if the covering dimension of $X$ is at most 1 (see, \emph{e.g.}, \cite[Proposition 1.7]{rieffel}) (and in the real case this is if and only if $\dim X <1$). We are now in a position to apply Proposition~\ref{multiplication}. \end{proof}

\begin{proposition}Let $k\in \mathbb{N}$ and denote by $\ell_\infty^k$ the $k^{{\rm th}}$ power of the scalar field endowed with the maximum norm. Then $\ell_\infty^k$ has uniformly open multiplication with $\delta(\varepsilon) = \varepsilon^2 / 4$ $(\varepsilon > 0)$. \end{proposition}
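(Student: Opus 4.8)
The plan is to reduce to the scalar field and then settle that case by an elementary, fully explicit construction. Write $\mathbb{K}$ for the scalar field, regarded as a one-dimensional Banach algebra. Since $\ell_\infty^k$ is precisely $\big(\bigoplus_{j=1}^{k}\mathbb{K}\big)_{\ell_\infty}$ with coordinatewise multiplication, and the $k$ summands are identical (hence trivially equi-uniformly open once $\mathbb{K}$ is shown to be uniformly open), Lemma~\ref{sums} transfers any modulus $\delta(\cdot)$ that works for $\mathbb{K}$ to $\ell_\infty^k$ verbatim. So the task becomes: for every $\varepsilon>0$, every $a,b\in\mathbb{K}$ and every $w\in\mathbb{K}$ with $|w-ab|<\varepsilon^2/4$, to produce $a',b'\in\mathbb{K}$ with $|a'-a|<\varepsilon$, $|b'-b|<\varepsilon$ and $a'b'=w$.

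I would split according to whether $\max(|a|,|b|)>\varepsilon/4$. If, say, $|b|>\varepsilon/4$ (the case $|a|>\varepsilon/4$ being symmetric), keep the second coordinate and correct the first: set $b'=b$ and $a'=a+(w-ab)/b$, so that $a'b'=w$ and $|a'-a|=|w-ab|/|b|<(\varepsilon^2/4)/(\varepsilon/4)=\varepsilon$. This disposes of the ``large factor'' case cleanly.

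The remaining case, $|a|\le\varepsilon/4$ and $|b|\le\varepsilon/4$, is the crux, since now no single-coordinate perturbation of $(a,b)$ can reach $w$ and one must build $(a',b')$ from scratch. Here I would first bound $|w|\le|w-ab|+|ab|<\varepsilon^2/4+\varepsilon^2/16=5\varepsilon^2/16$, and then choose $a',b'$ with $a'b'=w$ and $|a'|=|b'|=\sqrt{|w|}$: a common square root of $w$ in the complex case, $a'=\sqrt{|w|}$ together with $b'=\pm\sqrt{|w|}$ of the sign that makes $a'b'=w$ in the real case, and $a'=b'=0$ when $w=0$; this one recipe covers both scalar fields. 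Then $|a'-a|\le|a'|+|a|<\sqrt5\,\varepsilon/4+\varepsilon/4=\tfrac{\sqrt5+1}{4}\,\varepsilon$, and likewise for $b'$, so the whole argument comes down to the numerical inequality $\sqrt5+1<4$. I expect this last case — in particular, arranging the threshold $\varepsilon/4$ to be simultaneously large enough for the ``large factor'' case and small enough here — to be the only point requiring any thought; beyond that the proof is bookkeeping with constants, and treating $\mathbb{R}$ and $\mathbb{C}$ together costs nothing once the square-root construction is in hand.
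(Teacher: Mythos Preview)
Your proposal is correct and follows the same route as the paper: reduce to the one-dimensional case via Lemma~\ref{sums} and use that multiplication in the scalar field is uniformly open with $\delta(\varepsilon)=\varepsilon^2/4$. The paper simply states the scalar case as known, whereas you supply an explicit verification of it; your case split and square-root construction are clean and the numerics are fine.
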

\begin{proof}Multiplication in the scalar field is uniformly open with $\delta(\varepsilon) = \varepsilon^2 / 4$. The algebra $\ell_\infty^k$ is isometrically isomorphic to $\ell_\infty(\Gamma, A)$, where $\Gamma$ is a $k$-element set and $A$ is the scalar field. The claim now follows from Lemma~\ref{sums}. \end{proof}

Komisarski \cite{komisarski} proved if $X$ is a zero-dimensional compact Hausdorff space, then multiplication in the real algebra $C(X)$ is open. Actually it follows from his proof that it is uniformly open with $\delta(\varepsilon)=\varepsilon^2/4$. However, the proof does not  carry over directly to complex scalars. Here we offer a simple proof that works both for real and complex scalars. 
\begin{proposition}\label{0dim}Let $X$ be a zero-dimensional compact Hausdorff space. Then $C(X)$ has uniformly open multiplication. \end{proposition}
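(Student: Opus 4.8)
The plan is to exploit the fact that a zero-dimensional compact Hausdorff space $X$ has a base of clopen sets, so that locally constant functions form a dense subalgebra of $C(X)$; by Proposition~\ref{completion} it suffices to prove that multiplication restricted to this dense subalgebra $A_0$ is uniformly open, with $\delta(\varepsilon)=\varepsilon^2/4$. Fix $f,g\in A_0$; since each takes finitely many values, there is a finite clopen partition $X=U_1\sqcup\dots\sqcup U_n$ on which both $f$ and $g$ are constant, say with values $a_i,b_i$ on $U_i$. Then the subalgebra of functions that are constant on each $U_i$ is isometrically isomorphic to $\ell_\infty^n$, and multiplication there is uniformly open with $\delta(\varepsilon)=\varepsilon^2/4$ by the preceding proposition. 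Given $\varepsilon>0$ and $h\in C(X)$ with $\|h-fg\|<\varepsilon^2/4$, however, $h$ need not be constant on the $U_i$, so one cannot quote the finite-dimensional result verbatim; this gluing issue is the main obstacle.

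To get around it, I would work directly at the level of values. Let $\varepsilon>0$ and take $\delta=\varepsilon^2/4$. Suppose $h\in A_0$ satisfies $\|h-fg\|<\delta$; refine the partition so that $f$, $g$ and $h$ are all constant on each piece $U_i$, with values $a_i,b_i,c_i$ respectively. Then $|c_i-a_ib_i|<\delta=\varepsilon^2/4$ for every $i$, and by uniform openness of scalar multiplication there exist scalars $a_i',b_i'$ with $|a_i'-a_i|<\varepsilon$, $|b_i'-b_i|<\varepsilon$ and $a_i'b_i'=c_i$. Define $f',g'\in A_0$ by letting $f'$ take the value $a_i'$ on $U_i$ and $g'$ take the value $b_i'$ on $U_i$; these are locally constant, hence in $A_0$, and $\|f'-f\|=\max_i|a_i'-a_i|<\varepsilon$, similarly $\|g'-g\|<\varepsilon$, while $f'g'=h$. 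Thus $h\in B_{A_0}(f,\varepsilon)\cdot B_{A_0}(g,\varepsilon)$, which shows $B_{A_0}(fg,\delta)\subseteq B_{A_0}(f,\varepsilon)\cdot B_{A_0}(g,\varepsilon)$.

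This establishes that multiplication on $A_0$ is uniformly open with $\delta(\varepsilon)=\varepsilon^2/4$. Since $A_0$ is dense in $C(X)$, Proposition~\ref{completion} yields that $C(X)$ has uniformly open multiplication, completing the proof. I expect the only subtlety to be the bookkeeping with common refinements of the clopen partitions — one must be careful that a single finite partition can be chosen to make all of $f$, $g$ and the approximating $h$ simultaneously constant, which is automatic since each is locally constant on a compact space and hence has finite range with clopen level sets. Note that one could alternatively avoid Proposition~\ref{completion} and argue that the locally constant functions already form a directed union of copies of $\ell_\infty^n$ and invoke the direct-limit corollary, but the completion route is cleaner.
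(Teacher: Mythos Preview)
Your proof is correct and follows essentially the same approach as the paper: identify the dense subalgebra $A_0$ of locally constant functions, observe that any finitely many elements of $A_0$ lie in a common finite-dimensional subalgebra isometrically isomorphic to $\ell_\infty^n$ (where multiplication is uniformly open with $\delta(\varepsilon)=\varepsilon^2/4$), and then invoke Proposition~\ref{completion}. The paper phrases the first step slightly more abstractly---writing $A_0$ as the directed union $\bigcup_F A_F$ over finite subsets $F$ of the clopen algebra---but your explicit refinement of the partition to accommodate $f$, $g$ and $h$ simultaneously is exactly the content of that directedness, and your ``gluing issue'' dissolves precisely because Proposition~\ref{completion} only requires uniform openness on $A_0$ itself, not on all of $C(X)$.
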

\begin{proof}Let $\mathcal{B}$ be the field of all subsets of $X$ that are closed and open. For every finite subset $F\subseteq \mathcal{B}$ let $\mathcal{B}_F$ be the subfield of $\mathcal{B}$ generated by $F$. Then $A_F={\rm span}\{\mathds{1}_D\colon D\in \mathcal{B}_F\}$ is a~unital subalgebra of $C(X)$ which is isometrically isomorphic to $\ell_\infty^k$, where $k=\log_2 |\mathcal{B}_F|$. The algebra $A_0 = \bigcup_{F\subseteq \mathcal{B}, {\rm finite}} A_F$ is dense in $C(X)$ and it has uniformly open multiplication with $\delta(\varepsilon)=\varepsilon^2/4$ ($\varepsilon >0$). In order to complete the proof, we apply Proposition~\ref{completion}.\end{proof}

\begin{proposition}Let $S$ be a cancellative monoid and set $A=\ell_1(S)$. If ${\rm tsr}\, A =1$, then $S$ is a group. Consequently, if $S$ is not a group, multiplication in $A$ is not open. \end{proposition}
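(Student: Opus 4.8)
The plan is to prove the contrapositive of the first assertion by a short contradiction argument that plays Lemma~\ref{topzero}---the boundary of the invertible group consists of topological zero divisors---off against the observation that, because $S$ is cancellative, left and right translation by a basis vector $e_a$ are \emph{isometries} of $\ell_1(S)$, so no $e_a$ can be a topological zero divisor. The ``consequently'' part will then be immediate from Proposition~\ref{multiplication}.

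So suppose that $S$ is not a group; the goal is to show ${\rm tsr}\,A\ne 1$, where $A=\ell_1(S)$ (a unital Banach algebra with identity $e_1$). Since a monoid all of whose elements are invertible is a group, there is $a\in S$ with no two-sided inverse in $S$. I would first record that $e_a$ is then not invertible in $A$: if $u=\sum_{t\in S}\eta_t e_t$ satisfied $e_a\ast u=e_1$, then $\sum_t\eta_t e_{at}=e_1$, and since $t\mapsto at$ is injective by cancellativity, comparison of coefficients forces $u=e_b$ for the unique $b\in S$ with $ab=1$; if moreover $u\ast e_a=e_1$, then $e_{ba}=e_b\ast e_a=e_1$, so $ba=1$ as well, making $a$ invertible in $S$ --- a contradiction. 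Hence $e_a\in A\setminus{\rm GL}\,A$.

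Now assume, towards a contradiction, that ${\rm tsr}\,A=1$, i.e.\ ${\rm GL}\,A$ is dense in $A$. As ${\rm GL}\,A$ is open, $e_a$ lies on the boundary of ${\rm GL}\,A$, so by Lemma~\ref{topzero} the element $e_a$ is a topological zero divisor. On the other hand, cancellativity of $S$ means that for every $a\in S$ the maps $t\mapsto at$ and $t\mapsto ta$ on $S$ are injective, so for $x=\sum_s\xi_s e_s\in A$ the vectors $e_a\ast x=\sum_s\xi_s e_{as}$ and $x\ast e_a=\sum_s\xi_s e_{sa}$ both have the same norm as $x$. Therefore $\|x e_a\|+\|e_a x\|=2$ whenever $\|x\|=1$, whence $\inf\{\|x e_a\|+\|e_a x\|:\|x\|=1\}=2\ne0$ and $e_a$ is \emph{not} a topological zero divisor. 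This contradiction shows ${\rm tsr}\,A\ne 1$, which is the first assertion. The last sentence follows at once: if $S$ is not a group, then ${\rm tsr}\,A\ne 1$, so by Proposition~\ref{multiplication} multiplication in $A$ cannot be open.

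I do not expect a genuine obstacle. The argument hinges on two elementary points --- that a basis vector $e_a$ of $\ell_1(S)$ is non-invertible whenever $a$ is non-invertible in $S$, and that the translation operators $x\mapsto e_a\ast x$ and $x\mapsto x\ast e_a$ are isometric when $S$ is cancellative --- so the only thing needing care is the coefficient bookkeeping in the first point and the (trivial) check that the algebra in question is unital so that Lemma~\ref{topzero} applies.
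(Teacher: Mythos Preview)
Your proof is correct and follows essentially the same route as the paper's: pick a non-invertible $a\in S$, show $e_a\notin{\rm GL}\,A$, use cancellativity to see that left/right multiplication by $e_a$ is isometric so that $e_a$ is not a topological zero divisor, and invoke Lemma~\ref{topzero} together with Proposition~\ref{multiplication}. The only cosmetic differences are that the paper works with one-sided invertibility (showing $e_g$ is not right-invertible from $g$ having no right inverse) and deduces non-density of ${\rm GL}\,A$ directly, whereas you phrase the same step as a contradiction against the assumption ${\rm tsr}\,A=1$; neither choice affects the substance.
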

\begin{proof}Denote by 1 the neutral element in $S$. Assume contrapositively that $S$ is not a~group. Let $g\in S$ be an element that does not have either right or left inverse. Let us say that $g$ does not have a right inverse (the other case is symmetric). Consequently, $e_g$ is not (right) invertible in $A$ as if it were, with $e_g \ast x = e_1$, where $x=(\xi_s)_{s\in S}\in \ell_1(S)$, then
$ e_g \ast x = \sum_{s\in S}\xi_s e_{gs}=e_1,$ so $\xi_s e_{gs}=e_1$ for some $s\in S$, thus $\xi_s=1$ and $s$ is a right inverse for $g$. \smallskip

Moreover, as $S$ is cancellative the map $s\mapsto gs$ ($s\in S$) is injective and so
$$\|e_g \ast x\|=\Big\|\sum_{s\in S}\xi_s e_{gs} \Big\| = \sum_{s\in S} |\xi_s| = \|x\|,$$
whence $e_g$ is not a topological zero divisor. By Lemma~\ref{topzero}, ${\rm GL}\,A$ is not dense in $A$. We are now in a position to apply Proposition~\ref{multiplication}.\end{proof}

Having prepared necessary technology, we may answer \cite[Question 3 on p.~493]{bbs} in the negative.

\begin{corollary}\label{Cauchyproduct}The Cauchy product in $\ell_1(\mathbb{N}_0)$ is not an open map. \end{corollary}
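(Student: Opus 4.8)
The plan is to derive Corollary~\ref{Cauchyproduct} directly from the preceding proposition applied to the cancellative monoid $S = \mathbb{N}_0$. First I would observe that $\mathbb{N}_0$, with its additive structure, is a commutative cancellative monoid with neutral element $0$: addition of non-negative integers is associative, $n \mapsto n + m$ is injective for each $m$, and $0$ is the identity. Hence $\ell_1(\mathbb{N}_0)$ is a unital commutative Banach algebra whose multiplication is exactly the Cauchy product of series, as noted in Subsection~2.3.

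Next I would check that $\mathbb{N}_0$ is \emph{not} a group: the element $1 \in \mathbb{N}_0$ has no additive inverse in $\mathbb{N}_0$, since $1 + n = 0$ has no solution with $n \in \mathbb{N}_0$. With both hypotheses of the proposition verified—$\ell_1(S)$ is a semigroup algebra over a cancellative monoid that is not a group—the proposition immediately yields that multiplication in $\ell_1(\mathbb{N}_0)$ is not open.

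There is essentially no obstacle here; the corollary is a one-line specialisation, and the only thing to spell out is that addition on $\mathbb{N}_0$ is genuinely cancellative and that $\mathbb{N}_0$ fails to be a group. (For a reader who wants to see the mechanism concretely: following the proof of the proposition, $e_1 \in \ell_1(\mathbb{N}_0)$ is not invertible—if $e_1 \ast x = e_0$ with $x = (\xi_n)_{n \in \mathbb{N}_0}$, then $\sum_n \xi_n e_{n+1} = e_0$ forces a term $\xi_n e_{n+1} = e_0$, impossible—and since addition is injective, $\|e_1 \ast x\| = \|x\|$ for all $x$, so $e_1$ is not a topological zero divisor; thus $\mathrm{GL}\,\ell_1(\mathbb{N}_0)$ is not dense by Lemma~\ref{topzero}, and Proposition~\ref{multiplication} applies.) This settles \cite[Question 3 on p.~493]{bbs} in the negative.
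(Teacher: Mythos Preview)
Your proposal is correct and follows exactly the intended route: the corollary is an immediate specialisation of the preceding proposition to the cancellative monoid $\mathbb{N}_0$, which is not a group, and the paper itself offers no separate proof beyond this observation. Your parenthetical unpacking of the mechanism (non-invertibility of $e_1$ and the isometry $\|e_1\ast x\|=\|x\|$) merely retraces the proof of that proposition in the special case and is consistent with it.
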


\begin{remark}Let $S$ be a subset of the real line that contains the smallest element. Then $S$ is a commutative monoid when endowed with the operation of taking maximum. When $S$ contains at least two elements, it is not a group. Let us note that when $S$ is a doubleton, $A$ is isomorphic to $C(\{1,2\})$, which obviously has open multiplication. This example demonstrates that one cannot remove the hypothesis of cancellativity of $S$.

\end{remark}
Employing the Fourier transform, we may apply Proposition~\ref{dim2} to certain group algebras.
\begin{corollary}Let $G$ be a discrete Abelian group. Suppose that the covering dimension of the dual group $\widehat{G}$ is at least 2. Then multiplication in the complex algebra $A=\ell_1(G)$ is not open. \end{corollary}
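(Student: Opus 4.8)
The plan is to argue by contradiction, combining Proposition~\ref{multiplication} with the standard properties of the Fourier transform of a discrete Abelian group and with the dimension characterisation of topological stable rank~$1$ for $C(X)$ already invoked in the proof of Proposition~\ref{dim2}. Suppose, then, that $\dim\widehat G\geqslant 2$ and that multiplication in $A=\ell_1(G)$ is open. By Proposition~\ref{multiplication} we have ${\rm tsr}\,A=1$, that is, ${\rm GL}\,\ell_1(G)$ is dense in $\ell_1(G)$, and the goal is to derive from this that $\dim\widehat G\leqslant 1$.

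First I would recall the relevant properties of the Fourier transform. Since $G$ is discrete, the dual group $\widehat G$ is compact, and the Fourier transform $\mathcal F\colon\ell_1(G)\to C(\widehat G)$, determined by $\mathcal F(e_g)(\chi)=\chi(g)$, is a contractive unital algebra homomorphism. Its range contains all trigonometric polynomials on $\widehat G$ (the images of the finitely supported elements of $\ell_1(G)$); these separate the points of $\widehat G$ by Pontryagin duality, contain the constant functions, and are closed under complex conjugation, as the complex conjugate of a character is again a character. Hence $\mathcal F$ has dense range by the Stone--Weierstrass theorem. Being a unital homomorphism, $\mathcal F$ carries invertible elements to invertible elements, so that $\mathcal F\big[{\rm GL}\,\ell_1(G)\big]\subseteq{\rm GL}\,C(\widehat G)$; in particular every member of $\mathcal F\big[{\rm GL}\,\ell_1(G)\big]$ is a nowhere-vanishing function.

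Next I would transfer density through $\mathcal F$. A continuous map with dense range sends every dense subset of its domain to a dense subset of its target: from $\overline{{\rm GL}\,\ell_1(G)}=\ell_1(G)$ and continuity of $\mathcal F$ one gets $\mathcal F\big[\ell_1(G)\big]\subseteq\overline{\mathcal F[{\rm GL}\,\ell_1(G)]}$, and passing to closures, together with $\overline{\mathcal F[\ell_1(G)]}=C(\widehat G)$, shows that $\mathcal F[{\rm GL}\,\ell_1(G)]$ is dense in $C(\widehat G)$. Since this set consists of invertible elements of $C(\widehat G)$, the group ${\rm GL}\,C(\widehat G)$ is dense in $C(\widehat G)$; equivalently, ${\rm tsr}\,C(\widehat G)=1$. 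By the fact used in the proof of Proposition~\ref{dim2} (for complex scalars, $C(X)$ has dense invertible group precisely when $\dim X\leqslant 1$), this forces $\dim\widehat G\leqslant 1$, contradicting the hypothesis and finishing the proof.

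I do not anticipate a genuine obstacle; the two points deserving a moment's care are the density of the range of $\mathcal F$ (the Stone--Weierstrass argument) and the elementary observation that a unital algebra homomorphism preserves invertibility, which is exactly what lets density be pushed through $\mathcal F$.
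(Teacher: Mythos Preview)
Your proof is correct and follows essentially the same route as the paper: use Proposition~\ref{multiplication} to reduce to showing that ${\rm GL}\,\ell_1(G)$ is not dense, then push density through the Fourier transform (which has dense range) to contradict ${\rm tsr}\,C(\widehat G)=1$ via the dimension characterisation. The paper's argument is more terse (it cites the Riemann--Lebesgue lemma rather than Stone--Weierstrass for dense range), but the logical structure is identical.
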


\begin{proof}The Fourier transform $\mathcal{F}\colon \ell_1(G)\to C(\widehat{G})$ is a homomorphism of algebras. By the Riemann--Lebesgue lemma, it has dense range. If ${\rm GL}\,\ell_1(G)$ were dense in~$\ell_1(G)$, $\mathcal{F}[{\rm GL}\,\ell_1(G)]\subseteq {\rm GL}\,C(\widehat{G})$ would be dense in $C(\widehat{G})$, however this is prevented by the dimen\-sional constraint.  \end{proof}

\begin{corollary}\label{Zd}For any $d>1$, convolution in $\ell_1(\mathbb{Z}^d)$ is not open. \end{corollary}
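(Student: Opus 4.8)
The plan is to deduce this from the previous corollary, which handled group algebras $\ell_1(G)$ whose dual $\widehat G$ has covering dimension at least $2$. Taking $G=\mathbb Z^d$, the Pontryagin dual is the $d$-dimensional torus $\widehat G = \mathbb T^d$, which is a compact connected $d$-manifold and therefore has covering dimension exactly $d$. Since $d>1$, we have $\dim \mathbb T^d = d \geqslant 2$, so the hypothesis of the preceding corollary is satisfied and non-openness of convolution in $\ell_1(\mathbb Z^d)$ follows at once.

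Concretely I would first recall (or cite the standard fact from duality theory of locally compact abelian groups) that $\widehat{\mathbb Z} \cong \mathbb T$ and that the dual of a finite direct product is the product of the duals, so $\widehat{\mathbb Z^d}\cong \mathbb T^d$. Second, I would invoke the elementary topological fact that an $n$-dimensional topological manifold has covering dimension $n$; applied to $\mathbb T^d$ this gives $\dim \mathbb T^d = d$. Third, since $d>1$ forces $d\geqslant 2$, the previous corollary applies verbatim to $A=\ell_1(\mathbb Z^d)$ and yields that multiplication (convolution) is not open. That completes the argument.

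There is really no obstacle here: the statement is a direct specialisation of the corollary just proved, and the only ingredients are the textbook identification of the dual group of $\mathbb Z^d$ and the textbook value of the covering dimension of a torus. If one wanted to be self-contained one could instead note that $\mathbb T^d$ contains a subspace homeomorphic to $[0,1]^2$ (take a coordinate $2$-subtorus and a square inside it) and use monotonicity of covering dimension under closed subspaces together with $\dim[0,1]^2 = 2$; but appealing to the manifold fact is cleaner. The write-up should be just a couple of sentences.

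\begin{proof}
The Pontryagin dual of $\mathbb{Z}^d$ is the $d$-dimensional torus $\mathbb{T}^d$, which, being a compact connected topological $d$-manifold, has covering dimension $d$. As $d>1$ we have $\dim \widehat{\mathbb{Z}^d}=d\geqslant 2$, so the preceding corollary applies to $A=\ell_1(\mathbb{Z}^d)$ and convolution in $\ell_1(\mathbb{Z}^d)$ is not open.
\end{proof}
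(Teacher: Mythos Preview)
Your proof is correct and follows essentially the same approach as the paper: identify $\widehat{\mathbb{Z}^d}=\mathbb{T}^d$, observe that its covering dimension is $d\geqslant 2$, and invoke the preceding corollary.
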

\begin{proof}By standard harmonic analysis, $\widehat{\mathbb{Z}^d} = \mathbb{T}^d$, and the product of $d$ copies of the unit circle $\mathbb T$ has dimension $d>1$. \end{proof}

\begin{theorem}\label{cyclic}Let $\mathbb{Z}_n = \mathbb{Z}/n\mathbb{Z}$. Then multiplications (convolutions) in $\ell_1(\mathbb{Z}_n)$ $(n\in \mathbb{N})$ are not equi-uniformly open.\end{theorem}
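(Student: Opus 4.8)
The strategy is to argue by contradiction: were the convolutions on $\ell_1(\mathbb{Z}_n)$ $(n\in\mathbb{N})$ equi-uniformly open, I would produce a \emph{unital} commutative Banach algebra that has open multiplication but whose invertible group is not dense, contradicting Proposition~\ref{multiplication}. That algebra will be $\ell_1(G)$, where $G$ is an ultraproduct of the groups $\mathbb{Z}_n$ along an ultrafilter concentrated on the primes, and the point is that, for such an ultrafilter, $G$ is nothing but an infinite-dimensional $\mathbb{Q}$-vector space.

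First I would fix a non-principal ultrafilter $\mathcal{U}$ on $\mathbb{N}$ containing the set $\mathbb{P}$ of prime numbers; such $\mathcal{U}$ exists because the cofinite subsets of $\mathbb{P}$ generate an (automatically non-principal) filter. Assuming that the convolutions are equi-uniformly open, Corollary~\ref{ultraproductlemma} gives that $\prod_{n}^{\mathcal{U}}\ell_1(\mathbb{Z}_n)$ has uniformly open multiplication. Put $G=\prod_{n}^{\mathcal{U}}\mathbb{Z}_n$; this is an Abelian group, hence a monoid. Composing with the contractive surjective algebra homomorphism $h_G$ of~\eqref{homomorphismG} and invoking Lemma~\ref{quotient}\eqref{it2} shows that the unital algebra $\ell_1(G)$ has (uniformly) open multiplication, so Proposition~\ref{multiplication} forces $\mathrm{GL}\,\ell_1(G)$ to be dense in $\ell_1(G)$.

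The heart of the matter, and the step I expect to carry the real content, is that $G$ is a torsion-free divisible Abelian group of $\mathbb{Q}$-dimension at least $2$. For torsion-freeness: if $[(a_n)_n]\neq 0$ and $k[(a_n)_n]=0$, then $\{n:a_n\neq 0\text{ in }\mathbb{Z}_n\}$ and $\{n:n\mid ka_n\}$ both belong to $\mathcal{U}$, so intersecting with $\mathbb{P}$ yields a prime $p$ with $p\mid ka_p$ and $0<a_p<p$, whence $p\mid k$; as this must hold for $\mathcal{U}$-many primes while $k$ has only finitely many prime divisors, we reach a contradiction. For divisibility: given $k\in\mathbb{N}$ and $[(a_n)_n]\in G$, for every prime $p\nmid k$ (all but finitely many) the class of $k$ is invertible in the field $\mathbb{Z}_p$, so there is $b_p$ with $kb_p=a_p$; since $\{p\in\mathbb{P}:p\nmid k\}\in\mathcal{U}$, extending $(b_n)$ arbitrarily at the remaining indices gives $k[(b_n)_n]=[(a_n)_n]$. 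Finally $G\neq 0$, since $[(1,1,\dots)]\neq 0$ by non-principality of $\mathcal{U}$, and $\dim_{\mathbb{Q}}G\geqslant 2$: for instance $[(1)_n]$ and $[(\lfloor\sqrt{n}\rfloor)_n]$ are $\mathbb{Q}$-independent, because an integral relation $k[(1)_n]+m[(\lfloor\sqrt{n}\rfloor)_n]=0$ with $(k,m)\neq(0,0)$ forces $p\mid k+m\lfloor\sqrt{p}\rfloor$ for $\mathcal{U}$-many primes $p$, which is impossible once one notes that for $m=0$ the relation already yields $k=0$, while for $m\neq 0$ one has $0<|k+m\lfloor\sqrt{p}\rfloor|<p$ for all large $p$; alternatively one may simply observe $|G|=\mathfrak{c}$, so the $\mathbb{Q}$-dimension of $G$ is uncountable.

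It remains to extract the contradiction. As a $\mathbb{Q}$-vector space of dimension $\geqslant 2$, $G$ admits a surjective group homomorphism onto $\mathbb{Q}^2=\mathbb{Q}\oplus\mathbb{Q}$; by~\eqref{homomorphismtheta} this induces a surjective algebra homomorphism $\ell_1(G)\to\ell_1(\mathbb{Q}^2)$, so Lemma~\ref{quotient}\eqref{it2} forces $\ell_1(\mathbb{Q}^2)$ to have open multiplication, and hence, by Proposition~\ref{multiplication}, dense invertible group. But this is false: the Fourier transform $\mathcal{F}\colon\ell_1(\mathbb{Q}^2)\to C(\widehat{\mathbb{Q}^2})$ is an algebra homomorphism with dense range that carries invertibles to invertibles, so density of $\mathrm{GL}\,\ell_1(\mathbb{Q}^2)$ would make $\mathrm{GL}\,C(\widehat{\mathbb{Q}^2})$ dense in $C(\widehat{\mathbb{Q}^2})$, whereas $\widehat{\mathbb{Q}^2}=\widehat{\mathbb{Q}}\times\widehat{\mathbb{Q}}$ is a compact connected Abelian group whose character group $\mathbb{Q}^2$ has torsion-free rank $2$, so that $\dim\widehat{\mathbb{Q}^2}=2$ and $C(\widehat{\mathbb{Q}^2})$ has no dense invertible group by the criterion recalled in the proof of Proposition~\ref{dim2} (the same dimensional obstruction that underlies Corollary~\ref{Zd}), in both the real and the complex case. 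This contradiction shows that the convolutions on $\ell_1(\mathbb{Z}_n)$ cannot be equi-uniformly open. Thus the only delicate point is the group-theoretic identification of $G$ with a rational vector space for $\mathcal{U}$ supported on the primes; everything else merely assembles the permanence results of Section~\ref{permanence} with the non-openness criterion of Proposition~\ref{multiplication}.
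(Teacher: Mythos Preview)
Your proof is correct and takes a genuinely different, more elementary route from the paper's. The paper works with the ultraproduct $G=\prod^{\mathcal U}_n \mathbb{Z}_{n!}$, defines a homomorphism $G_{\mathrm{bdd}}\to\mathbb{Z}$ on the subgroup of bounded elements, extends it to a surjection $G\to\mathbb{Q}$ using injectivity of $\mathbb{Q}$ together with S\k{a}siada's theorem on slender groups, and then needs a \emph{second} ultrapower step to pass from $\mathbb{Q}$ to $\mathbb{R}$ before invoking the failure of $\mathrm{tsr}=1$ for $\ell_1(\mathbb{R})$. Your choice of an ultrafilter concentrated on the primes lets you identify $G$ directly as a torsion-free divisible group---that is, a $\mathbb{Q}$-vector space---by an entirely elementary argument, bypassing both S\k{a}siada's theorem and the second ultrapower. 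This is cleaner and more self-contained; the price is only that you must choose the ultrafilter with a little care, whereas the paper's argument uses an arbitrary non-principal $\mathcal{U}$ but a hand-picked sequence $(\mathbb{Z}_{n!})$.

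One small point deserves attention. The Fourier-transform step at the end is unproblematic over $\mathbb{C}$, but over $\mathbb{R}$ the Fourier transform on $\ell_1(\mathbb{Q}^2;\mathbb{R})$ lands in a proper real subalgebra of $C(\widehat{\mathbb{Q}^2};\mathbb{C})$ (the functions with $f(-\chi)=\overline{f(\chi)}$), not in $C(\widehat{\mathbb{Q}^2};\mathbb{R})$, so the dimensional criterion from the proof of Proposition~\ref{dim2} does not apply verbatim. The cheapest repair is already in your hands: since you observed $|G|=\mathfrak{c}$, you have $\dim_{\mathbb{Q}}G=\mathfrak{c}$, hence $G\cong\mathbb{R}$ as abelian groups; surjecting $G$ onto $\mathbb{R}$ rather than onto $\mathbb{Q}^2$ and citing the known failure of $\mathrm{tsr}=1$ for $\ell_1(\mathbb{R})$ in both the real and complex cases (\cite[Theorem~1.2]{misa}, exactly as the paper does) handles both scalar fields at once---and this is precisely the endpoint the paper reaches, only by a longer road.
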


\begin{proof}Assume that multiplications in $\ell_1(\mathbb{Z}_n)$ ($n\in \mathbb N$) are equi-uniformly open. Let us fix a non-principal ultrafilter $\mathcal{U}$ on $\mathbb{N}$ and let $$G={\prod_{n\in \mathbb{N}}}^{\mathcal U} \mathbb{Z}_{n!}.$$ Let us say that an element $[(g_n)_{n\in \mathbb N}]\in G$ is \emph{bounded} if for some $C>0$ it has representatives $(\widehat{g}_n)_{n\in \mathbb{N}}$ such that $-n!/2 \leqslant \widehat{g}_n \leqslant n!/2$ and $|\widehat{g}_n| \leqslant C$ for all $n$. Denote by $G_{\rm bdd}$ the subgroup of $G$ comprising all bounded elements. As remarked by A.~Thom \cite{thom}, the function
$$L\colon G_{{\rm bdd}}\to \mathbb{Z}\subseteq \mathbb Q, \qquad L\big([(g_n)_{n\in \mathbb N}]\big) = \lim_{n\to \mathcal{U}} \widehat{g}_n \quad \big([(g_n)_{n\in \mathbb N}]\in G_{{\rm bdd}}\big) $$
is a group homomorphism, which by injectivity of the additive group of rationals, extends to a group homomorphism $\theta \colon G\to \mathbb{Q}$ \cite[Proposition 24.5]{fuchs1}. By a result of S\k{a}siada \cite{sasiada} (see \cite[Proposition 94.2]{fuchs2} for a modern exposition), the image of $\theta$, being countable, must contain a non-trivial divisible subgroup, which implies that $\theta$ is surjective.\smallskip

By Lemma~\ref{ultraproductlemma} and the assumption that $\ell_1(\mathbb{Z}_n)$ ($n\in \mathbb N$) have equi-uniformly open multiplications, the algebra $\prod_{n\in\mathbb{N}}^{\mathcal U} \ell_1(\mathbb{Z}_{n!})$ has uniformly open multiplication. Since $h_G$ given by \eqref{homomorphismG} is surjective, $A=\ell_1(G)$ has uniformly open multiplication. As the map $h_\theta$ given by $\eqref{homomorphismtheta}$ is surjective, according to Lemma~\ref{quotient}, $\ell_1(\mathbb{Q})$ has uniformly open multiplication. Reasoning as before, we conclude that for any ultrafilter $\mathcal V$, $\ell_1(\mathbb Q^{\mathcal{V}})$ has uniformly open multiplication, where $\mathbb Q^{\mathcal{V}}$ denotes the ultrapower of $\mathbb Q$ with respect to $\mathcal V$. We observe that $\mathbb Q$ and $\mathbb R$ have isomorphic ultrapowers. \smallskip

Indeed, let us regard $\mathbb Q$ and $\mathbb R$ as vector spaces over $\mathbb Q$. Any of their ultrapowers retains the structre of a vector space over rationals. Two vector spaces over the the same field are isomorphic if and only if they have bases of the same cardinality so we need to find the corresponding dimensions of ultrapowers. We have $\dim_{\mathbb Q}\mathbb R = \mathfrak c$, the cardinality of the continuum, and as for any non-principal ultrafilter $\mathcal{V}$ on a set of cardinality $\mathfrak c$, $\mathbb R^{\mathcal{V}}$ has cardinality $\mathfrak c$, hence $\dim_{\mathbb Q}\mathbb R^{\mathcal{V}} = \mathfrak c$. For any such ultrafilter, we have also $\dim_{\mathbb Q}\mathbb  Q^{\mathcal{V}} = \mathfrak c$, which means that $\mathbb Q^{\mathcal{V}}$ and $\mathbb R^{\mathcal{V}}$ are isomorphic as vector spaces, so also as Abelian groups.\smallskip

Consequently, we may suppose that $\mathbb{R}$ embeds into $\mathbb{Q}^{\mathcal{V}}$. However, the group of reals is divisible, so its copy in $\mathbb{Q}^{\mathcal{V}}$ is a direct summand \cite[Proposition 24.5]{fuchs1}, hence $\mathbb R$ is a homomorphic image of $\mathbb{Q}^{\mathcal{V}}$. Consequently, by Lemma~\ref{quotient}, $\ell_1(\mathbb{R})$ has uniformly open multiplication. However, it is known that $\ell_1(\mathbb{R})$ does not have topological stable rank 1 (neither in the real nor in the complex case, see, \emph{e.g.}, \cite[Theorem 1.2]{misa}), so this is a~contradiction to Proposition~\ref{multiplication}.\end{proof}

\begin{corollary}\label{ZandQ}The algebras $\ell_1(\mathbb{Z})$ and $\ell_1(\mathbb{Q})$ do not have uniformly open multiplication. \end{corollary}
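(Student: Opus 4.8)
The plan is to deduce both non-openness statements from results already in hand, so that essentially no new work is required. For $\ell_1(\mathbb Z)$ I would invoke Theorem~\ref{cyclic} directly. For each $n\in\mathbb N$, reduction modulo $n$ is a surjective group homomorphism $\pi_n\colon\mathbb Z\to\mathbb Z_n$, which by \eqref{homomorphismtheta} induces a surjective algebra homomorphism $h_{\pi_n}\colon\ell_1(\mathbb Z)\to\ell_1(\mathbb Z_n)$. One checks on the canonical bases---lifting each $e_j\in\ell_1(\mathbb Z_n)$ to $e_j\in\ell_1(\mathbb Z)$ with $0\leqslant j<n$---that $h_{\pi_n}$ is a metric surjection, so $\ell_1(\mathbb Z_n)$ is isometrically isomorphic to $\ell_1(\mathbb Z)/\ker h_{\pi_n}$. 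Now, assuming for contradiction that multiplication in $\ell_1(\mathbb Z)$ is uniformly open, say with $\delta$ a given function of $\varepsilon$, Lemma~\ref{quotient}(i) shows that multiplication in each $\ell_1(\mathbb Z_n)$ is uniformly open with the \emph{same} dependence of $\delta$ on $\varepsilon$; since this dependence does not involve $n$, the multiplications in $\ell_1(\mathbb Z_n)$ $(n\in\mathbb N)$ are equi-uniformly open, contradicting Theorem~\ref{cyclic}.

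For $\ell_1(\mathbb Q)$ the argument is the concluding portion of the proof of Theorem~\ref{cyclic}, now run from a different starting point. Suppose multiplication in $\ell_1(\mathbb Q)$ is uniformly open and fix a non-principal ultrafilter $\mathcal V$ on a set of cardinality $\mathfrak c$. Regarding $\ell_1(\mathbb Q)$ as a constant family indexed by that set, it is trivially equi-uniformly open, so Corollary~\ref{ultraproductlemma} gives that $\prod^{\mathcal V}\ell_1(\mathbb Q)$ has uniformly open multiplication; since $h_{\mathbb Q}$ of \eqref{homomorphismG} is a surjective homomorphism onto $\ell_1(\mathbb Q^{\mathcal V})$, Lemma~\ref{quotient}(ii) shows that $\ell_1(\mathbb Q^{\mathcal V})$ has uniformly open multiplication. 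As already observed in the proof of Theorem~\ref{cyclic}, $\mathbb Q^{\mathcal V}$ and $\mathbb R^{\mathcal V}$ are isomorphic as abelian groups (both being $\mathbb Q$-vector spaces of dimension $\mathfrak c$), and $\mathbb R$, being divisible, embeds into $\mathbb R^{\mathcal V}$ as a direct summand; hence $\mathbb R$ is a homomorphic image of $\mathbb Q^{\mathcal V}$, so by \eqref{homomorphismtheta} the algebra $\ell_1(\mathbb R)$ is a homomorphic image of $\ell_1(\mathbb Q^{\mathcal V})$. By Lemma~\ref{quotient}(ii), $\ell_1(\mathbb R)$ would then have uniformly open multiplication, which contradicts Proposition~\ref{multiplication} since $\ell_1(\mathbb R)$ does not have topological stable rank $1$.

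The substance of the corollary lies entirely in Theorem~\ref{cyclic}, and there is no genuinely hard step; the only point needing mild care is checking that the homomorphisms $h_{\pi_n}$ are metric surjections, which is precisely what makes Lemma~\ref{quotient}(i) transfer the uniform---hence, the bound being independent of $n$, \emph{equi}-uniform---openness from $\ell_1(\mathbb Z)$ to the family $(\ell_1(\mathbb Z_n))_{n\in\mathbb N}$.
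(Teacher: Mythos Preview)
Your proof is correct and follows the same route as the paper: for $\ell_1(\mathbb Z)$, pass to each $\ell_1(\mathbb Z_n)$ via the quotient map and invoke Theorem~\ref{cyclic}; for $\ell_1(\mathbb Q)$, rerun the ultrapower-to-$\ell_1(\mathbb R)$ argument from the tail of that theorem's proof. Your explicit verification that $h_{\pi_n}$ is a metric surjection---so that Lemma~\ref{quotient}(i) transfers the \emph{same} $\delta$--$\varepsilon$ dependence and hence yields equi-uniform openness across all $n$---is a point the paper leaves implicit.
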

\begin{proof}It is enough to notice that each cyclic group is a quotient of $\mathbb{Z}$ and apply Lemma~\ref{quotient}. The assertion for $\ell_1(\mathbb{Z})$ follows from the final part of the proof of Theorem~\ref{cyclic}. \end{proof}

We conclude this section by considering the space ${\rm BV}[0,1]$ of all bounded, scalar-valued functions $f$ on $[0,1]$ that have bounded total variation $V_0^1f$. Then ${\rm BV}[0,1]$ becomes a~unital Banach algebra under the pointwise product and the norm $$\|f\|_{{\rm BV}} = \sup_{t\in [0,1]}|f(t)|+V_0^1f.$$
Even though not required for our purposes, ${\rm BV}[0,1]$ is isometric to a measure convolution algebra on a certain topological semigroup.\smallskip

Every element in ${\rm BV}[0,1]$ may be approximated by invertible elements as already proved by Kowalczyk and Turowska \cite[Lemma~3.1]{kt}. (In the case of complex scalars we apply their reasoning to the real and the imaginary part of $f$ separately.) By appealing to Corollary~\ref{weaklyopendense}, we obtain the the main result of \cite{kt}.
\begin{theorem}The algebra ${\rm BV}[0,1]$ has weakly open multiplication. \end{theorem}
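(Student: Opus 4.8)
The plan is to deduce the statement from density of the invertible group, via Corollary~\ref{weaklyopendense}. Since ${\rm BV}[0,1]$ is a unital Banach algebra, it suffices to prove that ${\rm GL}\,{\rm BV}[0,1]$ is dense, i.e.\ that ${\rm tsr}\,{\rm BV}[0,1]=1$. The first step I would record is a concrete description of invertibility in this algebra: an $f\in {\rm BV}[0,1]$ is invertible if and only if it is bounded away from zero, say $|f(t)|\geqslant\delta>0$ for all $t$. Indeed, in that case $1/f$ is bounded and, for any partition of $[0,1]$, one has $\sum_i |1/f(t_i)-1/f(t_{i-1})|\leqslant \delta^{-2}\sum_i|f(t_i)-f(t_{i-1})|\leqslant \delta^{-2}V_0^1 f<\infty$, so $1/f\in {\rm BV}[0,1]$; the converse implication is clear. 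Thus everything reduces to approximating an arbitrary $f\in {\rm BV}[0,1]$, in the ${\rm BV}$-norm, by functions whose modulus is bounded below by a positive constant.

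For real scalars this approximation is precisely \cite[Lemma~3.1]{kt}, which I would quote. For complex scalars I would reduce to the real case: writing $f=u+iv$ with $u,v$ real-valued elements of ${\rm BV}[0,1]$, apply the real result to obtain invertible real $u',v'\in {\rm BV}[0,1]$ with $\|u-u'\|_{\rm BV}$ and $\|v-v'\|_{\rm BV}$ arbitrarily small; each of $u',v'$ is then bounded away from zero, so $|u'+iv'|\geqslant |u'|$ is bounded below by a positive constant, whence $u'+iv'$ is invertible in the complex algebra ${\rm BV}[0,1]$ and $\|f-(u'+iv')\|_{\rm BV}$ is small. This establishes ${\rm tsr}\,{\rm BV}[0,1]=1$ in both cases, and an appeal to Corollary~\ref{weaklyopendense} then yields that multiplication in ${\rm BV}[0,1]$ is weakly open, completing the proof.

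The one genuinely substantial ingredient is the approximation step, and I expect it to be the main obstacle. A function of bounded variation may vanish, and even change sign, on an intricate set (one that may be infinite and accumulate), so naive remedies such as adding a small constant, or replacing $f$ by $f+\delta\,\mathrm{sgn}\,f$, either fail to move $f$ off zero or destroy the bounded-variation bound. Controlling $V_0^1$ while simultaneously pushing $f$ away from zero is exactly what \cite[Lemma~3.1]{kt} accomplishes, and that is the part I would invoke rather than re-derive; everything else in the argument is the routine bookkeeping indicated above.
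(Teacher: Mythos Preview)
Your proposal is correct and follows essentially the same route as the paper: the paper likewise invokes \cite[Lemma~3.1]{kt} to get density of invertibles (handling the complex case by treating real and imaginary parts separately) and then appeals to Corollary~\ref{weaklyopendense}. Your write-up simply fills in a few more details---the explicit characterisation of invertibility in ${\rm BV}[0,1]$ and the inequality $|u'+iv'|\geqslant |u'|$---that the paper leaves implicit.
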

Let us remark that the spectrum of ${\rm BV}[0,1]$ is zero-dimensional \cite[Theorem 5]{bluminger}, hence it is very likely that ${\rm BV}[0,1]$ has actually uniformly open multiplication.

\subsection{Banach algebras of operators on Banach spaces and their Calkin algebras}\label{calkin} 

Suppose that $E$ is a Banach space that admits a bounded linear operator $T\colon E\to E$ with the property that for some $\gamma > 0$ we have $\|Tx\|\geqslant \gamma \|x\|$ $(x\in E)$ and $T[E]$ is a proper subspace of $E$. If $T[E]$ is complemented in $E$ (one may find such operator, for instance, in the case where $X$ is isomorphic to its hyperplances; every classical Banach space has this property), then multiplication in $\mathcal{B}(E)$, the algebra  of all bounded linear operators on $E$, is not open. This partially answers \cite[Question 4 on p.~493]{bbs}. \smallskip

Indeed, as $T$ is bounded below and $T[E]$ is complemented, there is an operator $S\in \mathcal{B}(E)$ such that $ST={\rm id}_E$. However, being non-surjective, $T$ is not invertible. It follows now from Proposition~\ref{rieffel} that invertible elements in $\mathcal{B}(E)$ are not dense. \smallskip

By Atkinson's theorem (see, \emph{e.g.}, \cite[Theorem 3.2.2.]{arveson}), invertible elements in the Calkin algebra $\mathcal{Q}(E)$ are exactly images of Fredholm operators in $\mathcal{B}(E)$ via the canonical quotient map. Thus if one may find $T$ with the additional property that the quotient space $E / T[E]$ is infinite-dimensional, then multiplication in the Calkin algebra $\mathcal{Q}(E)$ is not open (this is still the case, for instance when $E$ is isomorphic to its Cartesian square $E\oplus E$; and this property holds for all classical Banach spaces too). Let us record our findings formally.

\begin{theorem}\label{B(E)andQ(E)}Suppose that $E$ is a Banach space which contains a~proper subspace $F$ that is isomorphic to $E$ and complemented. Then $\mathcal{B}(E)$ does not have open multiplication. If moreover $F$ may be chosen to have infinite codimension in $E$, then the Calkin algebra $\mathcal{Q}(E)$ does not have open multiplication either.\end{theorem}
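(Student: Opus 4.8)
The plan is to reduce both assertions to Proposition~\ref{multiplication}: since $\mathcal{B}(E)$ and $\mathcal{Q}(E)$ are unital Banach algebras, it is enough to show that neither has topological stable rank $1$, and in view of Proposition~\ref{rieffel} it suffices to produce in each of these algebras an element that is left-invertible but not invertible.

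First I would fix an isomorphism $J\colon E\to F$ and compose it with the inclusion $F\hookrightarrow E$ to obtain an operator $T\in\mathcal{B}(E)$ with $T[E]=F$. Being an isomorphism onto its range, $T$ is bounded below; being non-surjective (as $F\subsetneq E$), it is not right-invertible, hence not invertible in $\mathcal{B}(E)$. Now invoke the hypothesis that $F$ is complemented: let $P\in\mathcal{B}(E)$ be a projection with $P[E]=F$ and put $S=J^{-1}P\in\mathcal{B}(E)$, where $J^{-1}\colon F\to E$. Then $ST={\rm id}_E$, so $T$ is left-invertible in $\mathcal{B}(E)$. By Proposition~\ref{rieffel}, ${\rm tsr}\,\mathcal{B}(E)\neq 1$, and Proposition~\ref{multiplication} then shows that multiplication in $\mathcal{B}(E)$ is not open.

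For the Calkin algebra I would keep the same $T$ and $S$, now assuming in addition that $E/F$ is infinite-dimensional, and let $\pi\colon\mathcal{B}(E)\to\mathcal{Q}(E)$ be the quotient homomorphism. Applying $\pi$ to $ST={\rm id}_E$ yields $\pi(S)\pi(T)=1_{\mathcal{Q}(E)}$, so $\pi(T)$ is left-invertible. On the other hand $T$ is injective, while $\dim\bigl(E/T[E]\bigr)=\dim(E/F)=\infty$, so $T$ is not a Fredholm operator; by Atkinson's theorem the invertible elements of $\mathcal{Q}(E)$ are exactly the images under $\pi$ of Fredholm operators, so $\pi(T)$ is not invertible. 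Proposition~\ref{rieffel} again gives ${\rm tsr}\,\mathcal{Q}(E)\neq 1$, and Proposition~\ref{multiplication} completes the proof.

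The argument is essentially bookkeeping once the criterion of Proposition~\ref{multiplication} is available, so I do not expect a genuine obstacle; the two points deserving a line of care are that the complementation hypothesis (rather than mere bounded-below-ness of $T$) is what upgrades $T$ to a genuine left-invertible element of $\mathcal{B}(E)$, and that the non-Fredholmness of $T$ — certified via Atkinson's theorem — is exactly what prevents $\pi(T)$ from being invertible in $\mathcal{Q}(E)$.
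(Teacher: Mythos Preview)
Your proof is correct and follows essentially the same route as the paper: produce a left-invertible, non-invertible element (the isomorphism $T\colon E\to F$ viewed in $\mathcal{B}(E)$, with left inverse $J^{-1}P$) to violate ${\rm tsr}=1$ via Proposition~\ref{rieffel}, and then pass to $\mathcal{Q}(E)$ using Atkinson's theorem and the infinite-codimension hypothesis to see that $\pi(T)$ remains left-invertible but non-invertible. The paper's argument, given in the paragraphs preceding the theorem statement, is the same in all essentials.
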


In the case where $E$ is isomorphic to $E\oplus E$, the above theorem may be deduced from a result of Corach and Larotonda \cite{cl}, who proved that in such case the stable rank of $\mathscr{B}(E)$ is infinite. We note in passing that there exist spaces $E$ not isomorphic to their hyperplanes for which the Calkin algebra $\mathcal{Q}(E)$ is one-dimensional \cite{ah} or isomorphic to $C(X)$ for any countable compact Hausdorff space \cite{mpp}, hence it has (uniformly) open multiplication (Proposition~\ref{0dim}). On the other hand, there also exists a space $E$ that is not isomorphic to its hyperplanes whose corresponding Calkin algebra is isometrically isomorphic to $\ell_1(\mathbb{N}_0)$ \cite[Chapter~4]{tarbard}---in which case the Calkin algebra $\mathcal{Q}(E)$ fails to have open multiplication (Lemmata~\ref{quotient} and \ref{Cauchyproduct}).\smallskip

Let us elaborate more on the case where $\mathcal{Q}(E)$ has topological stable rank 1. Again, by Atkinson's theorem, the preimage $\kappa^{-1}[{\rm GL}\, \mathcal{Q}(E)]$ is the (open) set of all Fredholm operators on $E$, where $\kappa\colon \mathcal{B}(E)\to \mathcal{Q}(E)$ is the canonical quotient map. Thus, by the properties of the quotient norm, $\kappa^{-1}[{\rm GL}\, \mathcal{Q}(E)]$ is also dense in $\mathcal{B}(E)$. Let us make an extra assumption that every Fredholm operator on $E$ has Fredholm index 0 (this is the case, for example when $E$ is hereditarily indecomposable). By Lemma~\ref{index0approx}, every index-zero Fredholm operator is approximable by invertible operators, so in our case ${\rm GL}\, \mathcal{B}(E)$ is dense in $\mathcal{B}(E)$. By Proposition~\ref{weaklyopen}, $\mathcal{B}(E)$ has weakly open multiplication. We have thus proved the following result.

\begin{theorem}Suppose that $E$ is a Banach space with the property that every Fredholm operator in $\mathcal{B}(E)$ has index $0$. If ${\rm tsr}\,\mathcal{Q}(E)=1$, then both $\mathcal{B}(E)$ and $\mathcal{Q}(E)$ have weakly open multiplication.\end{theorem}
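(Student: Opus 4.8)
The plan is to deduce both assertions from Corollary~\ref{weaklyopendense}, so that the whole matter reduces to checking that the invertible groups of $\mathcal{Q}(E)$ and of $\mathcal{B}(E)$ are dense in the respective algebras. For $\mathcal{Q}(E)$ there is nothing to prove: density of ${\rm GL}\,\mathcal{Q}(E)$ in $\mathcal{Q}(E)$ is precisely the hypothesis ${\rm tsr}\,\mathcal{Q}(E)=1$, and Corollary~\ref{weaklyopendense} then yields weak openness of multiplication in $\mathcal{Q}(E)$ immediately. Hence the real content is establishing density of ${\rm GL}\,\mathcal{B}(E)$ in $\mathcal{B}(E)$.

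First I would invoke Atkinson's theorem to identify the preimage $\kappa^{-1}[{\rm GL}\,\mathcal{Q}(E)]$ under the canonical quotient map $\kappa\colon\mathcal{B}(E)\to\mathcal{Q}(E)$ as exactly the set of Fredholm operators on $E$. Next I would record the elementary observation that $\kappa$, being a quotient map onto $\mathcal{Q}(E)$, is an open surjection, so that the $\kappa$-preimage of a dense subset of $\mathcal{Q}(E)$ is dense in $\mathcal{B}(E)$: any nonempty open $U\subseteq\mathcal{B}(E)$ has $\kappa[U]$ open and nonempty in $\mathcal{Q}(E)$, hence $\kappa[U]$ meets ${\rm GL}\,\mathcal{Q}(E)$, and therefore $U$ meets $\kappa^{-1}[{\rm GL}\,\mathcal{Q}(E)]$. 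Combining these two points with the assumption ${\rm tsr}\,\mathcal{Q}(E)=1$ shows that the Fredholm operators on $E$ form a dense subset of $\mathcal{B}(E)$.

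Then the index-zero hypothesis enters. Every Fredholm operator on $E$ has index $0$, so by Lemma~\ref{index0approx} each of them lies in the closure of ${\rm GL}\,\mathcal{B}(E)$; since the Fredholm operators are themselves dense in $\mathcal{B}(E)$, it follows that ${\rm GL}\,\mathcal{B}(E)$ is dense in $\mathcal{B}(E)$. Applying Corollary~\ref{weaklyopendense} to $\mathcal{B}(E)$ then completes the argument, and together with the observation of the first paragraph this proves the theorem.

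I do not expect a serious obstacle: the proof is a short chain of facts already in hand. The only step that warrants an explicit line of justification is that the $\kappa$-preimage of a dense set is dense, which rests on $\kappa$ being an open map; everything else is a direct appeal to Atkinson's theorem, Lemma~\ref{index0approx} and Corollary~\ref{weaklyopendense}.
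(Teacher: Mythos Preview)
Your proposal is correct and follows essentially the same route as the paper: use Atkinson's theorem to identify $\kappa^{-1}[{\rm GL}\,\mathcal{Q}(E)]$ with the Fredholm operators, pull back density of ${\rm GL}\,\mathcal{Q}(E)$ via the quotient map, then apply Lemma~\ref{index0approx} together with the index-zero hypothesis to get density of ${\rm GL}\,\mathcal{B}(E)$, and finish with Corollary~\ref{weaklyopendense}. The only cosmetic difference is that the paper phrases the pull-back of density as a consequence of ``the properties of the quotient norm'' and cites Proposition~\ref{weaklyopen} rather than its immediate Corollary~\ref{weaklyopendense}; your explicit use of openness of $\kappa$ makes the same point.
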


The above theorem applies for instance to the spaces constructed by Motakis, Puglisi and Zisimopoulou \cite{mpp} whose Calkin algebras are of the form $C(X)$ for a countable, compact space $X$, and is related to the question of which other $C(X)$-spaces may be realised as Calkin algebras. By Komisarski's result \cite{komisarski}, if $\dim X \geqslant 2$, multiplication in the real algebra $C(X)$ is not weakly open. This suggests that should this be possible for at least 2-dimensional compacta, completely different methods for the construction of such $E$ must be employed. Proposition~\ref{index0} gives a more handy criterion for weak openness of multiplication. 

\begin{theorem}\label{B(E)weaklyopen}Suppose that $E$ is a complex Banach space with the property that each operator in $\mathcal{B}(E)$ has totally disconnected (for example, countable) spectrum. Then both $\mathcal{B}(E)$ and $\mathcal{Q}(E)$ have weakly open multiplication.\end{theorem}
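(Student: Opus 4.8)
The plan is to reduce the statement to Proposition~\ref{index0} together with the permanence results of Section~\ref{permanence}; the hypothesis on $E$ is precisely the one demanded by Proposition~\ref{index0}, applied to the algebra $\mathcal{B}(E)$.

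First I would record that $\mathcal{B}(E)$ is a unital complex Banach algebra and that, by assumption, every $T\in\mathcal{B}(E)$ has totally disconnected spectrum. (For the parenthetical remark one needs only the elementary fact that a countable subset of $\mathbb{C}$ is totally disconnected.) Proposition~\ref{index0} then yields ${\rm tsr}\,\mathcal{B}(E)=1$, that is, ${\rm GL}\,\mathcal{B}(E)$ is dense in $\mathcal{B}(E)$, and Corollary~\ref{weaklyopendense} gives that multiplication in $\mathcal{B}(E)$ is weakly open.

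For the Calkin algebra I would simply invoke that $\mathcal{Q}(E)=\mathcal{B}(E)/\mathcal{K}(E)$ is a quotient of $\mathcal{B}(E)$ by a closed ideal, so that Lemma~\ref{quotient}\eqref{it1} transports weak openness of multiplication from $\mathcal{B}(E)$ to $\mathcal{Q}(E)$. Alternatively, since the canonical surjection $\kappa\colon\mathcal{B}(E)\to\mathcal{Q}(E)$ is a unital homomorphism and hence carries invertible elements to invertible elements, one has $\sigma_{\mathcal{Q}(E)}(\kappa(T))\subseteq\sigma(T)$ for every $T$, so every element of $\mathcal{Q}(E)$ also has totally disconnected spectrum and Proposition~\ref{index0} applies to $\mathcal{Q}(E)$ directly.

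I do not expect a genuine obstacle: all of the work is already contained in Proposition~\ref{index0} (whose proof rests on the density of the set of operators whose resolvent set has empty interior in $\mathbb{C}$) and in Lemma~\ref{quotient} and Corollary~\ref{weaklyopendense}. The only point requiring a moment's care is the unitality hypothesis in those statements, which is satisfied here because both $\mathcal{B}(E)$ and $\mathcal{Q}(E)$ are unital.
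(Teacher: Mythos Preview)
Your proposal is correct and matches the paper's own approach: the paper presents Theorem~\ref{B(E)weaklyopen} as an immediate consequence of Proposition~\ref{index0} (together with Corollary~\ref{weaklyopendense}), and your treatment of $\mathcal{Q}(E)$ via Lemma~\ref{quotient}\eqref{it1} or the inclusion $\sigma_{\mathcal{Q}(E)}(\kappa(T))\subseteq\sigma(T)$ is exactly the sort of routine step the paper leaves implicit.
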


Every hereditarily indecomposable Banach space satisfies the hypothesis of the above theorem as operators acting on such spaces have countable spectra. Nevertheless, Read (\cite{read}) has constructed a Banach space $E_{\rm R}$ with the property that $\mathcal{B}(E_{\rm R})$ admits the unitisation of $\ell_2$ endowed with the trivial product as a quotient. As such, $\mathcal{B}(E_{\rm R})$ cannot have weakly open multiplication. We may apply Proposition~\ref{index0} to algebras of compact operators as they have countable spectra.

\begin{corollary}Let $E$ be a Banach space and let $\mathcal{K}^\sharp(E)$ denote the unital subalgebra of $\mathcal{B}(E)$ generated by compact operators on $E$. Then ${\rm tsr}\,\mathcal{K}^\sharp(E)=1$ and so $\mathcal{K}^\sharp(E)$ has weakly open multiplication.\end{corollary}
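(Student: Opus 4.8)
The plan is to combine two ingredients already assembled in the excerpt: the fact that the unital algebra generated by compact operators has totally disconnected (indeed countable) spectra, and Proposition~\ref{index0}, which upgrades that spectral property to topological stable rank~$1$. First I would recall that for any $T\in\mathcal{K}(E)$ the spectrum $\sigma_{\mathcal{B}(E)}(T)$ is countable, consisting of $0$ together with a sequence of eigenvalues converging to $0$; this is the classical Riesz--Schauder theory of compact operators. Since $\mathcal{K}^\sharp(E)=\mathbb{C}\,\mathrm{id}_E + \mathcal{K}(E)$ (the linear span of the identity and the compact operators, which is already an algebra because $\mathcal{K}(E)$ is an ideal), a general element has the form $\lambda\,\mathrm{id}_E + K$ with $K$ compact, and its spectrum is $\lambda + \sigma(K)$, again countable, hence totally disconnected.

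Next I would verify that $\mathcal{K}^\sharp(E)$ is a closed (hence Banach) unital subalgebra of $\mathcal{B}(E)$: it is the sum of the one-dimensional subspace $\mathbb{C}\,\mathrm{id}_E$ and the closed subspace $\mathcal{K}(E)$, and such a sum is closed because $\mathcal{K}(E)$ has finite (here, codimension one) ``complement''; alternatively one notes that if $E$ is infinite-dimensional then $\mathrm{id}_E\notin\mathcal{K}(E)$, so the sum is direct and the projection onto the $\mathbb{C}\,\mathrm{id}_E$ factor is continuous. (If $E$ is finite-dimensional then $\mathcal{K}^\sharp(E)=\mathcal{B}(E)=M_n$ has totally disconnected—indeed finite—spectra and the conclusion is immediate from Proposition~\ref{index0} or the remark following it.) With this in hand, Proposition~\ref{index0} applies verbatim to $A=\mathcal{K}^\sharp(E)$ and yields $\mathrm{tsr}\,\mathcal{K}^\sharp(E)=1$.

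Finally, $\mathrm{tsr}\,\mathcal{K}^\sharp(E)=1$ means $\mathrm{GL}\,\mathcal{K}^\sharp(E)$ is dense in $\mathcal{K}^\sharp(E)$, so Corollary~\ref{weaklyopendense} gives that $\mathcal{K}^\sharp(E)$ has weakly open multiplication. I do not anticipate a genuine obstacle here; the only point requiring a word of care is the claim that elements of $\mathcal{K}^\sharp(E)$ have the stated form and countable spectrum, i.e.\ checking that $\mathcal{K}^\sharp(E)$ really is just $\mathbb{C}\,\mathrm{id}_E+\mathcal{K}(E)$ rather than something larger. Since $\mathcal{K}(E)$ is already a two-sided ideal and $\mathrm{id}_E\notin\mathcal{K}(E)$ when $\dim E=\infty$, adjoining the identity produces exactly $\mathbb{C}\,\mathrm{id}_E\oplus\mathcal{K}(E)$ and no more, so this is routine. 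Everything else is an invocation of the cited results.
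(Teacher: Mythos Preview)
Your argument is correct and matches the paper's own approach: the paper simply remarks that operators in $\mathcal{K}^\sharp(E)$ have countable spectra and invokes Proposition~\ref{index0}, after which Corollary~\ref{weaklyopendense} gives weakly open multiplication. The only small point you might make explicit is that $\mathcal{K}^\sharp(E)$ is inverse-closed in $\mathcal{B}(E)$ (so the spectrum of $\lambda\,\mathrm{id}_E+K$ is the same whether computed in $\mathcal{K}^\sharp(E)$ or in $\mathcal{B}(E)$), but this is a routine consequence of the fact that inverses of invertible compact perturbations of nonzero scalars are themselves compact perturbations of scalars.
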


\section{Open problems}
\begin{enumerate}
\item Is multiplication (uniformly) open in the complex algebra $C(X)$ in the case where $X$ is an arbitrary one-dimensional compact Hausdorff space, \emph{e.g.}, the circle or the Pontryagin dual of the group of rational numbers?
\item What are other permanence properties of algebras with (weakly, uniformly) open multiplication? 
\begin{itemize}
\item Let $A$ be a Banach algebra with (weakly, uniformly) open multiplication. Does $A^{**}$ endowed with either Arens product have (weakly, uniformly) open multiplication? Note that $A^{**}$ may have uniformly open multiplication even if $A$ does not. Indeed, for any compact Hausdorff space $X$, both Arens products on $C(X)^{**}$ coincide and the latter algebra is isometrically isomorphic to $C(Y)$ for some zero-dimensional compact Hausdorff space $Y$, hence it has uniformly open multiplication (Proposition~\ref{0dim}).
\item Do extensions of Banach algebras preserve (weakly, uniformly) open multiplication? More precisely, if $J$ is a closed ideal of $A$ and both $J$ and $A/J$ have (weakly, uniformly) open multiplication, must it be also the case for $A$?
\item Does the projective tensor product of two Banach algebras with (weakly, uniformly) open multiplication have (weakly, uniformly) open multiplication? \end{itemize}
\item Do commutative, unital Banach algebras with zero-dimensional maximal ideal space have (uniformly) open multiplication? What if idempotents in the algebra are linearly dense?
\end{enumerate}

\subsection{Acknowledgements} The second-named author is grateful to Marek Balcerzak for bringing the question of openness of convolution to his attention and for warm hospitality during his visit to {\L}\'{o}d\'{z} (thanks are extended to W{\l}odzimierz Fechner, Szymon G{\l}\k{a}b and Micha{\l} Pop{\l}awski). It is our pleasure to thank Yemon Choi and Niels Laustsen (Lancaster) and Filip Strobin ({\L}\'{o}d\'{z}) for conversations concerning various aspects of this work. Finally, we wish to thank the referee for the remarks that improved presentation of the paper.

\end{document}